\documentclass[11pt]{amsart}

\usepackage[english]{babel}
\usepackage{bm}
\usepackage{fullpage}
\usepackage{amssymb,amsmath,latexsym}
\usepackage{color}
\usepackage{comment}

\newtheorem{lemma}{Lemma}

\newtheorem{theorem}{Theorem}
\newtheorem{corollary}{Corollary}

\theoremstyle{remark}
\newtheorem{conjecture}{Conjecture}

\newtheorem{remark}{Remark}

\newcommand{\cal}{\mathcal}
\def\ds{\displaystyle}


\def\Z{\mathbb{Z}}
\def\x{\boldsymbol{x}}
\def\y{\boldsymbol{y}}

\def\G{\cal{G}}

\def\hyp{\cal{H}}

\def\alt{\operatorname{alt}}
\def\salt{\operatorname{salt}}
\def\supp{\operatorname{supp}}

\def\KG{\operatorname{KG}}
\def\cd{\operatorname{cd}}
\def\id{\operatorname{id}}

\title{Hedetniemi's conjecture for Kneser hypergraphs}

\author{Hossein Hajiabolhassan} 
\address{H. Hajiabolhassan, Department of Applied Mathematics and Computer Science,Technical University of Denmark, DK-{\rm 2800} Lyngby, Denmark, and
Department of Mathematical Sciences, Shahid Beheshti University, G.C., P.O. Box {\rm 19839-63113}, Tehran, Iran}
\email{hhaji@sbu.ac.ir}

\author{Fr\'ed\'eric Meunier}
\address{F. Meunier, Universit\'e Paris Est, CERMICS, 77455 Marne-la-Vall\'ee CEDEX, France}
\email{frederic.meunier@enpc.fr}

\keywords{Categorical product; Kneser graphs and hypergraphs; Hedetniemi's conjecture; $Z_p$-Tucker lemma}

\begin{document}

\maketitle

\begin{abstract}
One of the most famous conjecture in graph theory is Hedetniemi's conjecture stating that the chromatic number of the categorical product of graphs is the minimum of their chromatic numbers. Using a suitable extension of the definition of the categorical product, Zhu proposed in 1992 a similar conjecture for hypergraphs. We prove that Zhu's conjecture is true for the usual Kneser hypergraphs of same rank. It provides to the best of our knowledge the first non-trivial and explicit family of hypergraphs with rank larger than two satisfying this conjecture (the rank two case being Hedetniemi's conjecture). We actually prove a more general result providing a lower bound on the chromatic number of the categorical product of any Kneser hypergraphs as soon as they all have same rank. We derive from it new families of graphs satisfying Hedetniemi's conjecture. The proof of the lower bound relies on the $Z_p$-Tucker lemma.
\end{abstract}

\section{Introduction}

\subsection{Categorical product and coloring} 
Let $G=(V,E)$ and $G'=(V',E')$ be two graphs. Their {\em categorical product}, denoted $G\times G'$, is the graph defined by $$\begin{array}{rcl}V(G\times G') & = & V\times V' \\ E(G\times G') & = &\{\{(u,u'),(v,v')\}:\,\{u,v\}\in E,\,\{u',v'\}\in E'\}.\end{array}$$
Hedetniemi's conjecture -- one of the most intriguing conjecture in graph theory -- states that the chromatic number of the categorical product of two graphs is the minimum of their chromatic numbers. Hedetniemi's conjecture has been verified in many cases, but the general case is still open. Tardif~\cite{Ta08} and Zhu~\cite{Zh98} provide extensive surveys of this topic. There exists also a categorical product for hypergraphs, defined by D\"ofler and Waller~\cite{DoWa80} in 1980. Using this definition of the categorical product of hypergraphs, Zhu~\cite{Zh92} conjectured in 1992 that a generalization of Hedetniemi's conjecture holds for hypergraphs as well.

Let $\G=(V,E)$ and $\G'=(V',E')$ be two hypergraphs. Their {\em categorical product}, denoted $\G\times\G'$, is the hypergraph defined by 
$$\begin{array}{rcl}
V(\G\times\G') & = & V\times V' \\
E(\G\times\G') & = &\left\{\{(u_1,u_1'),\ldots,(u_r,u_r')\}:\,r\in\Z_+,\,\{u_1,\ldots,u_r\}\in E,\,\{u_1',\ldots,u_r'\}\in E'\right\},
\end{array}$$
where the $u_i$'s and the $u_i'$'s do not need to be distinct. In other words, a subset of $V\times V'$ is an edge of $\G\times\G'$ if its projection on the first component is an edge of $\G$ and its projection on the second component is an edge of $\G'$. Note that this product can be made associative by a natural identification and thus defined for more than two hypergraphs.

We recall that a {\em proper coloring} of a hypergraph is an assignment of colors to the vertices so that there are no monochromatic edges. The {\em chromatic number} of a hypergraph $\G$, denoted $\chi(\G)$, is the minimum number of colors a proper coloring of $\G$ may have. If the chromatic number is $k$ or less, the hypergraph is {\em $k$-colorable}.
\begin{conjecture}[\cite{Zh92}]\label{conj:zhu}
Let $\G$ and $\G'$ be two hypergraphs. We have 
$$\chi(\G\times\G')=\min\{\chi(\G),\chi(\G')\}.$$
\end{conjecture}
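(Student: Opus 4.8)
The plan is to split the claimed equality into its two inequalities and treat them very differently. The inequality $\chi(\G\times\G')\le\min\{\chi(\G),\chi(\G')\}$ holds for all hypergraphs and is the routine direction: given a proper coloring $c$ of $\G$ with $\chi(\G)$ colors, I would pull it back along the projection by setting $c'\bigl((u,u')\bigr)=c(u)$. Every edge of $\G\times\G'$ projects onto an edge $\{u_1,\dots,u_r\}$ of $\G$, which is non-monochromatic under $c$; hence the product edge is non-monochromatic under $c'$, so $c'$ is proper and $\chi(\G\times\G')\le\chi(\G)$. The symmetric argument with $\G'$ yields the minimum. All the content therefore sits in the reverse inequality
$$\chi(\G\times\G')\ge\min\{\chi(\G),\chi(\G')\},$$
equivalently: if $\G\times\G'$ is $(m-1)$-colorable then $\G$ or $\G'$ is $(m-1)$-colorable, where $m=\min\{\chi(\G),\chi(\G')\}$.

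For this hard direction I would attach to each hypergraph a lower bound on its chromatic number coming from equivariant topology, and then show that such a bound is ``multiplicative'' under the categorical product. Concretely, I would associate to $\G$ a free $\Z_p$-space (a box/Hom-type complex $B(\G)$) whose $\Z_p$-coindex $\coind_{\Z_p} B(\G)$ controls $\chi(\G)$ from below, the passage from the topological invariant to an actual coloring bound being furnished by the $\Z_p$-Tucker lemma. The crucial structural input I would establish is that the complex of the product dominates the product of the complexes, so that a low-color coloring of $\G\times\G'$ produces a $\Z_p$-map out of a space of coindex at least $\min\{\coind_{\Z_p} B(\G),\coind_{\Z_p} B(\G')\}$; feeding this into a $\Z_p$-Tucker argument would force a monochromatic edge in $\G$ or in $\G'$ unless the color count is large enough.

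The decisive combinatorial step is the $\Z_p$-Tucker computation itself. I would encode an $(m-1)$-coloring of $\G\times\G'$ as a labeling of the vertices of a high-dimensional $\Z_p$-symmetric complex, arrange the labels so that the alternation/rainbow data of the first and second coordinates interact, and read off from a guaranteed alternating simplex a disjointness configuration that is an edge of $\G$ on one side or an edge of $\G'$ on the other. Making the two coordinates share a single $\Z_p$-action is the technical heart, and it is precisely here that having a common rank $r$ (so that one fixed prime $p\le r$ governs both factors) is what lets the construction close up.

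I expect the main obstacle to be the gap between the topological lower bound and the true chromatic number. The scheme above in fact proves $\chi(\G\times\G')\ge\min\{T(\G),T(\G')\}$, where $T$ denotes the topological bound, and this equals $\min\{\chi(\G),\chi(\G')\}$ only when $T$ is tight on the factors. For arbitrary hypergraphs $T$ can be strictly smaller than $\chi$, so the argument falls short of the full conjecture; indeed the rank-two instance is exactly Hedetniemi's conjecture, where no such tightness is available in general. Thus I would not expect to settle the statement for arbitrary $\G$ and $\G'$, but rather to obtain it exactly on those families---most notably Kneser hypergraphs of a common rank---for which the $\Z_p$-Tucker bound is known to meet the chromatic number.
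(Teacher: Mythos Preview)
The statement you are asked to prove is labeled \emph{Conjecture} in the paper, and the paper does \emph{not} prove it. For $r=2$ it specializes to Hedetniemi's conjecture, which the paper treats as open; the paper's actual results (Theorems~\ref{thm:main} and~\ref{thm:main_bis}, Corollary~\ref{cor:main}) establish only the special case of Kneser hypergraphs of a common rank, via the $Z_p$-Tucker lemma together with the colorability-defect and alternation-number bounds.

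Your proposal is therefore not a proof of the stated conjecture, and you say so yourself: you correctly isolate the trivial direction, and for the hard direction you correctly anticipate that the $Z_p$-Tucker machinery only yields $\chi(\G\times\G')\ge\min\{T(\G),T(\G')\}$ for a topological lower bound $T$, which settles the conjecture precisely when $T$ is tight on both factors. That diagnosis matches the paper exactly. The paper's concrete incarnation of your $T$ is $\hyp\mapsto\lceil\cd_r(\hyp)/(r-1)\rceil$ (or the sharper alternation-number version), and the proof of the product inequality is carried out directly with the $Z_p$-Tucker lemma (Lemma~\ref{lem:zptucker}) for prime $p$, plus a multiplicative reduction (Lemma~\ref{lem:nonprime}) to pass to composite $r$. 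Your abstract ``box/Hom complex with $\Z_p$-coindex'' language is a reasonable paraphrase of that same mechanism; the paper simply works combinatorially rather than through an explicit topological space.

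In short: there is no proof of Conjecture~\ref{conj:zhu} to compare against, your own write-up already acknowledges the genuine obstruction, and the partial result you predict (Kneser hypergraphs of equal rank) is exactly what the paper delivers by essentially the method you outline.
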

The chromatic number of the product is at most the chromatic number of each of the hypergraphs, since a coloring of any of them provides a coloring of the product. The difficult part in this conjecture is thus the reverse inequality.

When $\G$ and $\G'$ are graphs, their categorical product according to the definition for hypergraphs is not a graph anymore, since it will in general contain edges of cardinality four. Anyway, the chromatic number does not depend on the definition we take for the categorical product, as it can be easily checked (further explanations are given in the next paragraph). Thus Conjecture~\ref{conj:zhu} is a true generalization of Hedetniemi's conjecture. 

We take this remark as an opportunity to give another point of view on this categorical product for hypergraphs. Let $e\in E$ and $e'\in E'$. Consider the set of all simple bipartite graphs with no isolated vertices and with color classes $e$ and $e'$. The edges of the product $\G\times\G'$ obtained from $e$ and $e'$ are in one-to-one relation with these bipartite graphs. When $\G$ and $\G'$ are graphs, these bipartite graphs are those with two vertices in each color class and no isolated vertices. Such bipartite graphs all have a perfect matching: in other words, any edge of the product of the graphs seen as hypergraphs contains an edge of the product with the usual definition of the categorical product of graphs. It explains why the chromatic number of the categorical product of two graphs does not depend on the definition we take for the categorical product. 

\subsection{Kneser hypergraphs}
In 1976, Erd\H{o}s~\cite{Er76} initiated the study of {\em Kneser hypergraphs} $\KG^r(\hyp)$ defined for a hypergraph $\hyp=(V(\hyp),E(\hyp))$ and an integer $r\geq 2$ by
$$\begin{array}{rcl}
V(\KG^r(\hyp)) & = & E(\hyp) \\
E(\KG^r(\hyp)) & = & \{\{e_1,\ldots,e_r\}:e_1,\ldots,e_r\in E(\hyp),\,e_i\cap e_j=\emptyset\mbox{ for all $i,j$ with $i\neq j$}\}.
\end{array}$$ 
These hypergraphs enjoy several properties, which are interesting from both graph theoretical and set theoretical point of views,  especially when $r=2$, i.e. when we are dealing with {\em Kneser graphs}. Among many references dealing with the Kneser hypergraphs, one can cite~\cite{AlDrLu09,LaZi07,Zi02}. For Kneser graphs, there are much more references, see~\cite{Ha10,Lo79,SiTa06,SiTa07,St76,VaVe05} among many of them. 

There are also ``usual'' Kneser hypergraphs, which are obtained with $\hyp=([n],{{[n]}\choose k})$. They are denoted $\KG^r(n,k)$. In the present paper, we prove that Conjecture~\ref{conj:zhu} is true for the usual Kneser hypergraphs, which provides to the best of our knowledge the first non-trivial and explicit family of hypergraphs not being graphs for which Conjecture~\ref{conj:zhu} is true. We actually prove a more general result involving the {\em colorability defect} of a hypergraph.  The $r$-colorability defect of a hypergraph $\hyp$, introduced by Dol'nikov~\cite{Do88} for $r=2$ and by K\v{r}\'i\v{z}~\cite{Kr92,KR00} for any $r$, is denoted $\cd_r(\hyp)$ and is the minimum number of vertices to be removed from $\hyp$ so that the remaining induced subhypergraph is $r$-colorable. The {\em subhypergraph of $\hyp$ induced by a set $X$}  is denoted $\hyp[X]$ and is the hypergraph with vertex set $X$ and with edge set $\{e\in E(\hyp):\,e\subseteq X\}$ (note that this definition of an induced subhypergraph departs from the usual way to define it). We have thus
  $$\cd_r(\hyp)=\min\{|Y|:\,Y\subseteq V(\hyp)\mbox{ and } \chi(\hyp[V(\hyp)\setminus Y])\leq r\}.$$ 
  K\v{r}\'i\v{z}~\cite{Kr92,KR00} proved that $\chi(\KG^r(\hyp))\geq\cd_r(\hyp)$. The $r$-colorability defect has then been used by other authors as a tool for exploring further properties of coloring of graphs and hypergraphs~\cite{Me14,SiTa07,Zi02}.

\subsection{Main results}
In this paper, we prove the following theorem.

\begin{theorem}\label{thm:main}
Let $\hyp_1,\ldots,\hyp_t$ be hypergraphs and $r\geq 2$ be a positive integer. If none of the $\hyp_{\ell}$'s have $\emptyset$ as an edge, then 
$$\chi\left(\KG^r(\hyp_1)\times\cdots\times\KG^r(\hyp_t)\right)\geq\frac{1}{r-1}\min_{\ell=1,\ldots,t}\cd_r(\hyp_{\ell}).$$
\end{theorem}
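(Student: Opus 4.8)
The plan is to assume first that $r$ is prime — the general case follows by the now‑standard device of running the same argument over a prime $p>r$ — and then to derive the bound from the $\Z_r$-Tucker lemma. I regard a point of $(\Z_r)^{*N}$ as a tuple $(A_0,\dots,A_{r-1})$ of pairwise disjoint subsets of $[N]$, not all empty, with $\Z_r$ acting by cyclic shift of the blocks; the argument produces, under the assumption that the product is ``too colorable'', a $\Z_r$-equivariant map of the sort the $\Z_r$-Tucker lemma forbids. I would induct on $t$, the base case $t=1$ re‑proving $\chi(\KG^r(\hyp))\ge\frac1{r-1}\cd_r(\hyp)$ (the bound of K\v{r}\'i\v{z}, in the spirit of Dol'nikov).

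For $t=1$: suppose $\KG^r(\hyp)$ has a proper coloring $c$ with $C$ colors; put $n=|V(\hyp)|$, $d=\cd_r(\hyp)$, fix a linear order on $V(\hyp)$, and define $\lambda\colon(\Z_r)^{*n}\to\Z_r\times([n-d]\sqcup[C])$ by: for $x=(A_0,\dots,A_{r-1})$, if no block $A_i$ contains an edge of $\hyp$, then $\{A_0,\dots,A_{r-1}\}$ is a proper $r$-coloring of $\hyp[\supp(x)]$, so by the definition of $\cd_r$ we have $|\supp(x)|\le n-d$ and we let $\lambda(x)$ be a ``position label'' in $[n-d]$ recording the last element of $\supp(x)$ together with its $\Z_r$-value; otherwise, scanning in the fixed order, let $i$ be the block in which an edge of $\hyp$ is first completed, let $\gamma\in[C]$ be the color of that edge, and set $\lambda(x)=(i,\gamma)$. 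One checks that $\lambda$ is $\Z_r$-equivariant, that comparable points never get the same position label, and — using that $c$ is proper — that no chain $x^{(1)}\preceq\cdots\preceq x^{(r)}$ carries one fixed color label together with all $r$ values of $\Z_r$ as signs (such a chain would produce $r$ pairwise disjoint $\hyp$-edges of a common color). The $\Z_r$-Tucker lemma then forces $n\le(n-d)+(r-1)C$, i.e. $(r-1)C\ge d$.

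For the inductive step it suffices to prove: for every hypergraph $\G$, $\chi(\KG^r(\hyp)\times\G)\ge\min\!\big(\tfrac1{r-1}\cd_r(\hyp),\,\chi(\G)\big)$; one then applies this with $\G=\KG^r(\hyp_2)\times\cdots\times\KG^r(\hyp_t)$ and bounds $\chi(\G)$ by the induction hypothesis. So let $c$ be a proper coloring of $\KG^r(\hyp)\times\G$ with $C<\chi(\G)$ colors, so that $\G$ is not $C$-colorable. I would repeat the construction of $\lambda$ verbatim, with a single change in the second case: once the block $i$ and an edge $e$ of $\hyp$ first completed in it have been found, the map $v\mapsto c(e,v)$ is an improper $C$-coloring of $\G$, hence has a monochromatic edge; we take $\gamma\in[C]$ to be the color of that edge and set $\lambda(x)=(i,\gamma)$. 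Equivariance and the position-label condition are unaffected, and the $\Z_r$-Tucker lemma will again output $(r-1)C\ge\cd_r(\hyp)$ — provided the color-label condition is verified.

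That verification is where I expect the real difficulty to lie. A chain carrying one color label with all $r$ signs now produces $r$ pairwise disjoint edges $e_1,\dots,e_r$ of $\hyp$, but each $e_s$ comes equipped with its own monochromatic edge $w_s$ of $\G$, whereas a monochromatic edge of the product $\KG^r(\hyp)\times\G$ needs a single $\G$-edge coordinated simultaneously with all of $e_1,\dots,e_r$. Reconciling the $w_s$ is the crux: I expect it to force the monochromatic certificate extracted from $\G$ to depend only on the color $\gamma$ (not on the individual $e_s$), which suggests that the induction should really be carried on a $\Z_r$-equivariant Tucker-type certificate for $\G$ rather than merely on $\chi(\G)$, set up so as to compose cleanly with the Kneser part of the construction. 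Granting this compatibility, the base case and the inductive step together give $\chi(\KG^r(\hyp_1)\times\cdots\times\KG^r(\hyp_t))\ge\frac1{r-1}\min_\ell\cd_r(\hyp_\ell)$, which is the theorem.
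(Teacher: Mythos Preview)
Your proposal has two genuine gaps. First, the reduction from general $r$ to prime $r$ is not the ``take a prime $p>r$'' device you invoke: the quantities $\cd_r(\hyp)$ and $\KG^r(\hyp)$ are intrinsically tied to $r$, and there is no monotonicity letting you read the $r$-statement off from a $p$-statement for an unrelated larger prime. The paper instead proves a \emph{multiplicative} reduction (if the theorem holds for $r'$ and for $r''$ then it holds for $r'r''$), via an auxiliary hypergraph $\mathcal{T}_{\hyp,C,s}$ whose edges are the sets $A\subseteq V(\hyp)$ with $\cd_s(\hyp[A])>(s-1)C$; a short calculation with $\cd_{rs}(\hyp)\le r(s-1)C+\cd_r(\mathcal{T}_{\hyp,C,s})$ then yields the reduction to prime factors.

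Second, and more seriously, the gap you yourself flag in the inductive step is real, and your suggested patch (``carry the induction on a Tucker-type certificate for $\G$'') is a gesture, not a proof. A chain carrying one color label $\gamma$ with all $r$ signs yields disjoint $e_1,\dots,e_r\in E(\hyp)$ together with, for each $s$, a $\G$-edge $w_s$ monochromatic for $c(e_s,\cdot)$; but to contradict properness of $c$ on the product you need a \emph{single} $\G$-edge $\{v_1,\dots,v_r\}$ with $c(e_s,v_s)=\gamma$ for every $s$, and nothing in your construction coordinates the $w_s$. The paper sidesteps this entirely by abandoning induction on $t$: it applies the $Z_p$-Tucker lemma once, on vectors of length $n=\sum_\ell|V(\hyp_\ell)|$, splitting each $\x$ into blocks $\y_1,\dots,\y_t$. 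In the regime where not every $\hyp_\ell$ has an edge in every $Z_p$-block, the sign is supplied by an auxiliary $Z_p$-equivariant map $\varepsilon$ on tuples $(A_1,\dots,A_t)\in(2^{Z_p})^t$ recording which blocks already contain edges; in the regime where every $\hyp_\ell$ has an edge in every block, the label is $\alpha+c(S_1,\dots,S_t)$ for a $\preceq$-minimal $t$-tuple of edges chosen \emph{simultaneously}, one per hypergraph, inside a common block. That simultaneity is precisely what your inductive scheme cannot manufacture.
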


The case $t=1$ is K\v{r}\'i\v{z}'s theorem.

Note that the product in this theorem involves an arbitrary number of Kneser hypergraphs, while in Conjecture~\ref{conj:zhu} only two hypergraphs are involved. The reason is that the categorical product of hypergraphs is associative and if the conjecture holds for two hypergraphs, it can be straightforwardly extended to the product of any number of hypergraphs. On the contrary, it is easy to see that Theorem~\ref{thm:main} for only two Kneser hypergraphs does not imply its correctness for any number of Kneser hypergraphs.

We will actually see that a stronger result involving the {\em $r$-alternation number} instead of the $r$-colorability defect holds. The {\em $r$-alternation number} of a hypergraph has been introduced by Alishahi and Hajiabolhassan~\cite{AlHa13} and it provides better lower bounds for Kneser hypergraphs. The definition is given later in the paper.

The fact that Conjecture~\ref{conj:zhu} is true for the usual Kneser hypergraphs is obtained via the easy equality $$\cd_r\left([n],{{[n]}\choose k}\right)=n-r(k-1)$$ and a theorem by Alon, Frankl, and Lov\'asz~\cite{AlFrLo86} stating that $$\chi(\KG^r(n,k))=\left\lceil\frac{n-r(k-1)}{r-1}\right\rceil.$$

\begin{corollary}\label{cor:main}
Let $r\geq 2$ be an integer and let $n_1\ldots,n_t$ and $k_1,\ldots,k_t$ be positive integers such that $n_{\ell}\geq rk_{\ell}$ for $\ell=1,\ldots,t$. We have $$\chi\left(\KG^r(n_1,k_1)\times\cdots\times\KG^r(n_t,k_t)\right)=\min_{\ell=1,\ldots,t}\chi\left(\KG^r(n_{\ell},k_{\ell})\right).$$
\end{corollary}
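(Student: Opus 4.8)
The plan is to derive Corollary~\ref{cor:main} as a short consequence of Theorem~\ref{thm:main}, combined with the elementary evaluation of the colorability defect of a complete uniform hypergraph and with the Alon--Frankl--Lov\'asz formula for $\chi(\KG^r(n,k))$; no further topology is needed.

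First I would dispose of the inequality ``$\le$''. For each index $\ell$, the projection of $V(\KG^r(n_1,k_1)\times\cdots\times\KG^r(n_t,k_t))$ onto its $\ell$-th coordinate maps every edge of the product to an edge of $\KG^r(n_\ell,k_\ell)$, by the very definition of the categorical product of hypergraphs. Hence pulling back a proper coloring of $\KG^r(n_\ell,k_\ell)$ along this projection produces a proper coloring of the product, which gives $\chi(\KG^r(n_1,k_1)\times\cdots\times\KG^r(n_t,k_t))\le\chi(\KG^r(n_\ell,k_\ell))$ for every $\ell$; this is the remark already made after Conjecture~\ref{conj:zhu}.

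For ``$\ge$'', apply Theorem~\ref{thm:main} to $\hyp_\ell=([n_\ell],\binom{[n_\ell]}{k_\ell})$. Since $k_\ell\ge1$, the empty set is not an edge of any $\hyp_\ell$, so the theorem applies and yields
\[
\chi\!\left(\KG^r(n_1,k_1)\times\cdots\times\KG^r(n_t,k_t)\right)\ \ge\ \frac{1}{r-1}\min_{\ell}\cd_r\!\left([n_\ell],\binom{[n_\ell]}{k_\ell}\right).
\]
Next I would record the claimed identity $\cd_r([n],\binom{[n]}{k})=n-r(k-1)$ whenever $n\ge rk$: the subhypergraph induced on a set $X$ is the complete $k$-uniform hypergraph on $X$, which is $r$-colorable precisely when $X$ can be split into $r$ classes each of size at most $k-1$, i.e.\ precisely when $|X|\le r(k-1)$; hence the least number of vertices one must delete from $[n]$ is $n-r(k-1)$, a quantity that is positive since $n\ge rk>r(k-1)$. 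Substituting, and using that $\chi$ of the product is an integer, we obtain
\[
\chi\!\left(\KG^r(n_1,k_1)\times\cdots\times\KG^r(n_t,k_t)\right)\ \ge\ \left\lceil\frac{1}{r-1}\min_{\ell}\bigl(n_\ell-r(k_\ell-1)\bigr)\right\rceil .
\]

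Finally, since $x\mapsto\lceil x/(r-1)\rceil$ is nondecreasing it commutes with the minimum, so the right-hand side equals $\min_{\ell}\lceil(n_\ell-r(k_\ell-1))/(r-1)\rceil$, which by the Alon--Frankl--Lov\'asz theorem~\cite{AlFrLo86} is exactly $\min_{\ell}\chi(\KG^r(n_\ell,k_\ell))$. Together with the inequality ``$\le$'' this gives the asserted equality. I expect essentially no obstacle here: the statement is a deduction from Theorem~\ref{thm:main} rather than a new argument, and the only points needing a line of care are the evaluation of $\cd_r$ for complete uniform hypergraphs and the interchange of the ceiling with the minimum.
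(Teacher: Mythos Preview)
Your proposal is correct and follows exactly the route indicated in the paper: combine Theorem~\ref{thm:main} with the identity $\cd_r([n],\binom{[n]}{k})=n-r(k-1)$ and the Alon--Frankl--Lov\'asz formula, together with the trivial upper bound noted after Conjecture~\ref{conj:zhu}. The extra details you supply (the verification of the colorability-defect identity and the interchange of $\lceil\cdot\rceil$ with the minimum) are fine and only make explicit what the paper leaves to the reader.
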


If Conjecture~\ref{conj:zhu} is true, then the equality of Corollary~\ref{cor:main} is also true when the Kneser hypergraphs have not the same rank $r$. However, we were not able to find a proof of this more general result. When $r=2$, Corollary~\ref{cor:main} implies the already known fact that Hedetniemi's conjecture is true for the usual Kneser graphs~\cite{Do09,He79,VaVe06}.

\section{Proof of the main result}\label{sec:main}

\subsection{Main steps of the proof}

The proof consists in proving the following two lemmas. Their combination provides a proof of Theorem~\ref{thm:main}. They are respectively proved in Section~\ref{subsec:nonprime} and in Section~\ref{subsec:comb_topo}.

\begin{lemma}\label{lem:nonprime}
Let $r'$ and $r''$ be two positive integers. If Theorem~$\ref{thm:main}$ holds for both $r'$ and $r''$, then it holds also for $r=r'r''$.
\end{lemma}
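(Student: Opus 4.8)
The plan is to exploit the multiplicativity of Kneser hypergraphs under the operation $\hyp \mapsto \KG^r(\hyp)$ when $r$ factors as $r = r'r''$. The key observation I would try to establish is that there is a natural relationship allowing a coloring of $\KG^{r'r''}(\hyp_1)\times\cdots\times\KG^{r'r''}(\hyp_t)$ to be transformed into a coloring of a product of $\KG^{r'}$'s (or $\KG^{r''}$'s) of suitably chosen auxiliary hypergraphs, so that the bounds assumed to hold for $r'$ and $r''$ can be chained together. Concretely, for a hypergraph $\hyp$, an edge of $\KG^{r'r''}(\hyp)$ is a collection of $r'r''$ pairwise disjoint edges of $\hyp$; grouping these $r'r''$ edges into $r''$ blocks of $r'$ each, one sees that the vertex set $E(\hyp)$ of $\KG^{r'r''}(\hyp)$ can be identified (after the grouping) with a subset of $E(\KG^{r'}(\hyp)^{\text{aux}})$ for an appropriate auxiliary hypergraph, and that monochromatic $r''$-blocks in the inner level produce monochromatic $r'r''$-edges. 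I would aim to make this precise as an inequality of the form $\chi(\KG^{r'r''}(\hyp)) \ge$ (something expressible via $\KG^{r''}$ of a hypergraph whose $r'$-colorability defect is controlled by $\cd_{r'r''}(\hyp)$), and crucially to check this goes through with the categorical product in place.

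The main steps, in order, would be: (1) Set up the auxiliary hypergraph construction: given $\hyp$ with no empty edge, define $\hyp^{(r')}$ on vertex set $V(\hyp)$ whose edges are the unions of $r'$ pairwise disjoint edges of $\hyp$ (again having no empty edge, since $\hyp$ has none), and verify the colorability-defect bookkeeping $\cd_{r''}(\hyp^{(r')}) \ge \cd_{r'r''}(\hyp)$ — indeed, an $r''$-coloring of $\hyp^{(r')}[V\setminus Y]$ pulls back to an $r'r''$-coloring of $\hyp[V\setminus Y]$ by refining each of the $r''$ classes into $r'$ classes via any proper $r'$-coloring of $\hyp$ restricted to that class... this needs care and is where I expect the real work. (2) Establish a hypergraph homomorphism or at least a chromatic inequality $\chi(\KG^{r'r''}(\hyp)) \ge \chi(\KG^{r'}(\hyp^{(r')}))$-type statement, compatible with $\times$, i.e. producing a coloring of $\KG^{r'}(\hyp_1^{(r')})\times\cdots\times\KG^{r'}(\hyp_t^{(r')})$ from one of $\KG^{r'r''}(\hyp_1)\times\cdots\times\KG^{r'r''}(\hyp_t)$ using at most as many colors. (3) Apply Theorem~\ref{thm:main} for $r'$ to the product of the $\KG^{r'}(\hyp_\ell^{(r')})$ to get a lower bound of $\frac{1}{r'-1}\min_\ell \cd_{r'}(\hyp_\ell^{(r')})$, then relate $\cd_{r'}(\hyp_\ell^{(r')})$ to $\cd_{r'r''}(\hyp_\ell)$ — here the factor $r''$ should enter, via a similar refinement argument at the other level, giving $\cd_{r'}(\hyp^{(r')}) \ge \frac{1}{?}\cd_{r'r''}(\hyp)$ or using the $r''$-bound too. (4) Combine the arithmetic so that the product $\frac{1}{r'-1}\cdot(\text{gain from }r'')$ telescopes to $\frac{1}{r'r''-1}$, recalling $r'r''-1 = (r'-1)r'' + (r''-1)$, which is exactly the kind of identity that should make the two hypotheses fit together.

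The hard part, I expect, will be getting the two colorability-defect reductions to interlock with the correct constants so that the final denominator is precisely $r-1 = r'r''-1$ rather than something weaker like $(r'-1)(r''-1)$. A naive composition of the $r'$-bound and the $r''$-bound tends to produce the product of the denominators, which is too lossy; the trick must be to apply the inductive hypotheses \emph{asymmetrically} — e.g. use the $r'$-version of Theorem~\ref{thm:main} on the outer Kneser construction and the $r''$-version only to bound a colorability defect, or vice versa — and to carry out one of the two reductions at the level of the colorability defect directly (where one has the flexibility of an arbitrary $r'$-coloring to refine) rather than through a second application of the theorem. Verifying that all of this remains valid for the categorical product of $t$ hypergraphs, and in particular that the auxiliary construction $\hyp\mapsto\hyp^{(r')}$ commutes suitably with $\times$ at the level of the needed inequalities, is the second delicate point; I would handle it by working with colorings directly (a coloring of the product is a function on the product of edge sets) rather than with homomorphisms, so that the projections built into the definition of the categorical product of hypergraphs can be tracked explicitly.
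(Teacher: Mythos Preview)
Your broad strategy---build an auxiliary hypergraph on the same vertex set, transfer the coloring, and apply the $r'$- and $r''$-cases asymmetrically so that the identity $r'r''-1=(r'-1)r''+(r''-1)$ makes the constants telescope---is exactly the shape of the paper's argument. But the specific auxiliary hypergraph you propose is the wrong one, and this is a genuine gap, not just a matter of care.

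Your $\hyp^{(r')}$ has as edges the unions of $r'$ pairwise disjoint edges of $\hyp$. The claimed inequality $\cd_{r''}(\hyp^{(r')})\ge\cd_{r'r''}(\hyp)$ is false: a color class in an $r''$-coloring of $\hyp^{(r')}$ is merely a set containing no $r'$ pairwise disjoint edges of $\hyp$, which is far weaker than being $r'$-colorable in $\hyp$, so the ``refine each class into $r'$ subclasses'' step does not go through. Concretely, take $\hyp=K_5$ and $r'=r''=2$: then $\hyp^{(2)}$ is the hypergraph on $[5]$ whose edges are all $4$-subsets, which is already $2$-colorable (split $3$--$2$), so $\cd_2(\hyp^{(2)})=0$; yet $\cd_4(K_5)=1$. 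The same obstruction undermines step~(2): there is no direct way to color a vertex of $\KG^{r''}(\hyp^{(r')})$ (a union of $r'$ disjoint edges) using a coloring $c$ defined on single edges of $\hyp$.

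The paper's repair is to make the auxiliary hypergraph depend on the number $C$ of colors and on the colorability defect itself: it sets
\[
E(\mathcal{T}_{\hyp,C,r'})=\bigl\{A\subseteq V(\hyp):\cd_{r'}(\hyp[A])>(r'-1)C\bigr\}.
\]
This buys two things simultaneously. First, the coloring transfer works: for any tuple $(e_1,\dots,e_t)$ of edges of the $\mathcal{T}_{\hyp_\ell,C,r'}$'s, the $r'$-case of the theorem applied to $\KG^{r'}(\hyp_1[e_1])\times\cdots\times\KG^{r'}(\hyp_t[e_t])$ forces a monochromatic edge under $c$, and one \emph{defines} the new color $h(e_1,\dots,e_t)$ to be the color of that monochromatic edge; properness of $h$ on $\KG^{r''}(\mathcal{T}_{\hyp_1,C,r'})\times\cdots\times\KG^{r''}(\mathcal{T}_{\hyp_t,C,r'})$ then follows from properness of $c$. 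Second, the bookkeeping inequality becomes
\[
\cd_{r'r''}(\hyp)\le r''(r'-1)C+\cd_{r''}(\mathcal{T}_{\hyp,C,r'}),
\]
which holds essentially by definition of $\mathcal{T}$ (each non-edge class can be made $r'$-colorable by deleting at most $(r'-1)C$ vertices). Applying the $r''$-case of the theorem to $h$ and combining with this inequality yields $(r'r''-1)C\ge\min_\ell\cd_{r'r''}(\hyp_\ell)$, the desired contradiction. The missing idea in your plan is precisely this: the auxiliary edges must be defined by a colorability-defect threshold (so that the ``refinement'' really is available), and the coloring transfer must go through the monochromatic-edge trick rather than a direct map.
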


\begin{lemma}\label{lem:main}
Let $\hyp_1,\ldots,\hyp_t$ be hypergraphs and $p$ be a prime number. If none of the $\hyp_{\ell}$'s have $\emptyset$ as an edge, then 
$$\chi\left(\KG^p(\hyp_1)\times\cdots\times\KG^p(\hyp_t)\right)\geq\frac{1}{p-1}\min_{\ell=1,\ldots,t}\cd_p(\hyp_{\ell}).$$
\end{lemma}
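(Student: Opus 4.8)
The plan is to prove Lemma~\ref{lem:main} by a topological argument based on the $Z_p$-Tucker lemma, following the now-standard strategy for lower bounds on chromatic numbers of Kneser hypergraphs (as in K\v{r}\'i\v{z}, and in Alishahi--Hajiabolhassan). Suppose for contradiction that $\KG^p(\hyp_1)\times\cdots\times\KG^p(\hyp_t)$ admits a proper coloring with $c$ colors where $c(p-1)<\min_{\ell}\cd_p(\hyp_\ell)$, and set $d=c(p-1)$ so that $d<\cd_p(\hyp_\ell)$ for every $\ell$. Fix for each $\ell$ an arbitrary identification of $V(\hyp_\ell)$ with $[n_\ell]$ where $n_\ell=|V(\hyp_\ell)|$. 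The idea is to build, from such a coloring, a $Z_p$-equivariant map from a sphere-like complex of dimension $d$ to one of dimension $d-1$ (suitably set up so that the $Z_p$-action is free on both), contradicting the $Z_p$-Tucker lemma.

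Concretely, I would work with the $Z_p$-Tucker lemma in the form: there is no $Z_p$-equivariant map $\lambda$ from the set of nonzero signed vectors $X\in(\Z_p\cup\{0\})^d$ to $[c]\times\Z_p$ such that (i) for each $X$ with small support, $\lambda$ lands in a ``forbidden'' region, and (ii) the standard no-monochromatic-simplex-type condition holds. To use it, I build a single map on sequences $X=(x_1,\dots,x_d)\in(\Z_p\cup\{0\})^d$ as follows. Given such an $X$ of weight $w=|\supp(X)|$, I read off, for each color $g\in\Z_p$, the set $X^g=\{i:x_i=g\}$; via the fixed identifications this gives, in each coordinate $\ell$, a subset of $[n_\ell]$ (one has to be slightly careful with how the $d$ positions map into the ground sets $[n_\ell]$ — the clean way is to use a separate block of positions for each $\ell$, i.e. work with $(\Z_p\cup\{0\})^{d}$ split into $t$ blocks, but actually the cleanest is to use the \emph{same} $d\le n_\ell$ in each block since $d<\cd_p(\hyp_\ell)\le n_\ell$). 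Because $d<\cd_p(\hyp_\ell)$, if $X$ has large enough weight then in \emph{every} coordinate $\ell$ one of the $p$ color classes $X^g$ restricted to $[n_\ell]$ contains an edge of $\hyp_\ell$; choosing such an edge in each coordinate simultaneously produces an edge of the product hypergraph $\KG^p(\hyp_1)\times\cdots\times\KG^p(\hyp_t)$ all of whose vertices are monochromatic in color $g$ of $X$ — actually we need $p$ pairwise disjoint such edges in each $\hyp_\ell$, one for each color class, to produce an edge of $\KG^p(\hyp_\ell)$; this is exactly what the colorability-defect hypothesis gives once $w>d$. Feeding the resulting monochromatic edge of the product through the assumed $c$-coloring yields a color in $[c]$, and the ``winning'' $g\in\Z_p$ yields the $\Z_p$-coordinate; for $X$ of small weight one assigns values in the forbidden small-support region from the first coordinate of the sign pattern. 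The $Z_p$-equivariance is immediate from the construction (cyclically permuting the roles of the colors).

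The key step — and the main obstacle — is verifying the two hypotheses of the $Z_p$-Tucker lemma, in particular that no edge of the product hypergraph receives the same color under the assumed proper $c$-coloring and is simultaneously monochromatic in the $\Z_p$-sense: this is where one uses both that the coloring is proper on the \emph{product} and the definition of the product's edge set (projection onto each coordinate is an edge of the corresponding $\KG^p(\hyp_\ell)$). I need to make sure that when $X\subseteq X'$ (one signed vector refines another) and both have large weight, the edges selected can be taken nested/compatible so that the simplex formed by the $\lambda$-values of a chain $X^{(1)}\subseteq\cdots\subseteq X^{(p)}$ with all the same $[c]$-value but distinct $\Z_p$-values would force a monochromatic edge in the product — contradicting properness. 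This requires choosing the edge-selection rule canonically (e.g. lexicographically smallest edge inside the color class), and checking it interacts correctly with refinement; this bookkeeping is the delicate part. A secondary obstacle is purely notational: setting up the positions so that the $d$ coordinates of the $Z_p$-Tucker domain map coherently into all $t$ ground sets at once; this is handled by taking $d<\min_\ell\cd_p(\hyp_\ell)\le\min_\ell n_\ell$ and using the first $d$ elements of each $[n_\ell]$, or equivalently by the alternation-number refinement alluded to later in the paper, which I would set up in the general form so that Lemma~\ref{lem:main} and its alternation-number strengthening come out together.

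Once Lemma~\ref{lem:main} is established for primes, Theorem~\ref{thm:main} for general $r$ follows by combining it with Lemma~\ref{lem:nonprime} and induction on the number of prime factors of $r$, which is why it suffices to treat the prime case here.
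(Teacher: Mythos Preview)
Your overall strategy --- build a $Z_p$-equivariant map from a hypothetical proper $C$-coloring and apply the $Z_p$-Tucker lemma --- is the right one, and it is what the paper does. But the concrete setup you describe is broken in two places.

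\medskip

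\textbf{Wrong domain.} You take the domain of the Tucker map to be $(\Z_p\cup\{0\})^{d}$ with $d=C(p-1)$, and propose to read off subsets of each $V(\hyp_\ell)$ from ``the first $d$ elements of each $[n_\ell]$''. This cannot work: the colorability defect $\cd_p(\hyp_\ell)$ is a global quantity and says nothing about the subhypergraph $\hyp_\ell\big[[d]\big]$, which may well be $p$-colorable or even edgeless. In particular your sentence ``this is exactly what the colorability-defect hypothesis gives once $w>d$'' is vacuous, since $w=|\supp(X)|\le d$ always. The paper instead takes the domain to be $(\Z_p\cup\{0\})^{n}$ with $n=\sum_{\ell}n_\ell$, splitting each $\x$ into blocks $\y_1,\ldots,\y_t$ with $\y_\ell\in(\Z_p\cup\{0\})^{n_\ell}$; then $\supp_j(\y_\ell)$ ranges over \emph{all} subsets of $V(\hyp_\ell)$, and the colorability defect genuinely bites. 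The Tucker parameters are $\alpha=n-\cd_p(\hyp_1)+p-1$ and $m=\alpha+C-1$, and the final inequality $\alpha+(m-\alpha)(p-1)\ge n$ unwinds to the desired $C\ge\cd_p(\hyp_1)/(p-1)$.

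\medskip

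\textbf{Missing the ``low'' region.} To invoke the coloring $c$, you need for every $j\in\Z_p$ and every $\ell$ an edge of $\hyp_\ell$ inside $\supp_j(\y_\ell)$ --- i.e.\ $p$ pairwise disjoint edges in each $\hyp_\ell$. The colorability-defect hypothesis guarantees, for large $|\y_\ell|$, only that \emph{some} $j$ has an edge in $\supp_j(\y_\ell)$, not that all do. The paper handles this by introducing, for each $\ell$, the set $A_\ell\subseteq Z_p$ of those $\omega^j$ for which $\supp_j(\y_\ell)$ contains an edge, and splitting into two cases. In the ``good'' case $A_\ell=Z_p$ for all $\ell$, one proceeds essentially as you describe (choosing the $\preceq$-minimal edge tuple, where $\preceq$ refines the color order, so that chains with equal $v$-value and pairwise distinct $s$-values would yield a monochromatic edge of the product). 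In the ``bad'' case some $A_\ell\neq Z_p$, the paper uses a carefully designed value
\[
v(\x)=\sum_{\ell:\,A_\ell\in\{\emptyset,Z_p\}}|\y_\ell|\;+\;\sum_{\ell:\,A_\ell\notin\{\emptyset,Z_p\}}\Big(|A_\ell|+\max\big\{|\widetilde{\y}_\ell|:\widetilde{\y}_\ell\subseteq\y_\ell,\ E(\hyp_\ell[\supp_j(\widetilde{\y}_\ell)])=\emptyset\ \forall j\big\}\Big)
\]
together with an auxiliary $Z_p$-equivariant sign map $\varepsilon:(2^{Z_p})^t\setminus\{\emptyset,Z_p\}^t\to Z_p$ applied to $(A_1,\ldots,A_t)$. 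This formula is what makes $v(\x)\le\alpha$ in the bad case and, crucially, what makes $v$ nondecreasing along chains while keeping $s$ constant whenever $v$ stays constant. Your proposal contains no analogue of this construction, and without it the first bullet of the $Z_p$-Tucker lemma cannot be verified.
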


\begin{proof}[Proof of Theorem~$\ref{thm:main}$]
The theorem is a direct consequence of Lemmas~\ref{lem:nonprime} and~\ref{lem:main}.
\end{proof}

\subsection{Reduction to the case when $r$ is a prime number}\label{subsec:nonprime}

The proof of Lemma~\ref{lem:nonprime} is based on the following lemma. Given a hypergraph $\hyp$ and two positive integers $s$ and $C$, we define a new hypergraph $\mathcal{T}_{\hyp,C,s}$ by 
$$\begin{array}{rcl}
V(\mathcal{T}_{\hyp,C,s}) & = & V(\hyp) \\
E(\mathcal{T}_{\hyp,C,s}) & = & \{A\subseteq V(\hyp):\,\cd_s(\hyp[A])>(s-1)C\}.
\end{array}$$ 
\begin{lemma}\label{lem:th}
The following inequality holds for any positive integer $r$ $$\cd_{rs}(\hyp)\leq r(s-1)C+\cd_r(\mathcal{T}_{\hyp,C,s}).$$
\end{lemma}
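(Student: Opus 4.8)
The plan is to exhibit an explicit set $Y \subseteq V(\hyp)$ of the required size whose removal makes $\hyp$ properly $rs$-colorable, thereby bounding $\cd_{rs}(\hyp)$ from above. Write $m = \cd_r(\mathcal{T}_{\hyp,C,s})$, so there is a set $Z \subseteq V(\hyp)$ with $|Z| = m$ such that $\mathcal{T}_{\hyp,C,s}[V(\hyp)\setminus Z]$ is properly $r$-colorable; fix such an $r$-coloring with color classes $A_1,\ldots,A_r$ (a partition of $V(\hyp)\setminus Z$). The key point is the definition of the edge set of $\mathcal{T}_{\hyp,C,s}$: since no $A_i$ is an edge of $\mathcal{T}_{\hyp,C,s}$, we have $\cd_s(\hyp[A_i]) \le (s-1)C$ for each $i$. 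Hence for each $i$ we may remove a set $Y_i \subseteq A_i$ with $|Y_i| \le (s-1)C$ so that $\hyp[A_i \setminus Y_i]$ is properly $s$-colorable.

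Now I would assemble the coloring. Set $Y = Z \cup Y_1 \cup \cdots \cup Y_r$, so $|Y| \le m + r(s-1)C = r(s-1)C + \cd_r(\mathcal{T}_{\hyp,C,s})$, which is exactly the bound we want. It remains to check that $\hyp[V(\hyp)\setminus Y]$ is properly $rs$-colorable. The vertex set $V(\hyp)\setminus Y$ is partitioned into the $r$ blocks $A_i \setminus Y_i$. On block $i$, use the proper $s$-coloring of $\hyp[A_i\setminus Y_i]$ with a palette of $s$ colors private to block $i$; doing this across all $r$ blocks uses $rs$ colors in total. Any edge $e$ of $\hyp[V(\hyp)\setminus Y]$ has $e \subseteq V(\hyp)\setminus Y$; if $e$ meets two different blocks it receives at least two colors (the palettes are disjoint), and if $e$ lies inside a single block $A_i \setminus Y_i$ then it is an edge of $\hyp[A_i\setminus Y_i]$ and hence is not monochromatic under that block's proper coloring. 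So the combined coloring is proper, $\chi(\hyp[V(\hyp)\setminus Y]) \le rs$, and therefore $\cd_{rs}(\hyp) \le |Y| \le r(s-1)C + \cd_r(\mathcal{T}_{\hyp,C,s})$.

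The only genuinely delicate point is making sure the quantities line up: we need the induced-subhypergraph convention used throughout the paper (an edge survives in $\hyp[X]$ iff it is contained in $X$) so that an edge of $\hyp[A_i\setminus Y_i]$ really is an edge of $\hyp$ contained in that block, and we need $\mathcal{T}_{\hyp,C,s}$'s edges to be exactly the vertex sets $A$ with $\cd_s(\hyp[A]) > (s-1)C$ so that the $r$-coloring of $\mathcal{T}_{\hyp,C,s}$ forbidding monochromatic edges is precisely what guarantees each color class $A_i$ has small $s$-colorability defect. No topology is needed here; it is a purely combinatorial bookkeeping argument, and I expect it to be short. (Note the inequality is stated for ``any positive integer $r$,'' including $r=1$, where it reads $\cd_s(\hyp)\le (s-1)C+\cd_1(\mathcal{T}_{\hyp,C,s})$ and the argument degenerates to the single-block case.)
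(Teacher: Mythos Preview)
Your proof is correct and essentially identical to the paper's: both remove a witness set for $\cd_r(\mathcal{T}_{\hyp,C,s})$, take the color classes of a proper $r$-coloring of the remaining $\mathcal{T}_{\hyp,C,s}$, observe that each class fails to be an edge of $\mathcal{T}_{\hyp,C,s}$ and hence has $s$-colorability defect at most $(s-1)C$, trim each class accordingly, and assemble an $rs$-coloring on what is left. Your write-up is in fact a bit more explicit than the paper's in checking that the combined $rs$-coloring is proper (splitting into the cross-block and within-block cases).
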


\begin{proof}
Let $A\subseteq V(\hyp)$ be such that $|A|=|V(\hyp)|-\cd_r(\mathcal{T}_{\hyp,C,s})$ and such that $\mathcal{T}_{\hyp,C,s}[A]$ is $r$-colorable. The existence of such an $A$ is ensured by the definition of $\cd_r(\mathcal{T}_{\hyp,C,s})$. Consider any proper coloring of $\mathcal{T}_{\hyp,C,s}[A]$ with $r$ colors and denote by $X_j\subseteq A$ the set of vertices of color $j$. Since the coloring is proper, $X_j$ is never an edge of $\mathcal{T}_{\hyp,C,s}$. It implies that we have $\cd_s(\hyp[X_j])\leq (s-1)C$ for all $j\in[r]$. We can thus remove at most $(s-1)C$ vertices from each $X_j$ and get $Y_j\subseteq X_j$ with $\chi(\hyp[Y_j])\leq s$. There is thus a proper coloring of $\hyp[\bigcup_{j=1}^rY_j]$ with at most $rs$ colors and $\hyp[\bigcup_{j=1}^rY_j]$ is a subhypergraph of $\hyp$ obtained by the removal of at most $r(s-1)C+\cd_r(\mathcal{T}_{\hyp,C,s})$ vertices.
\end{proof}

\begin{proof}[Proof of Lemma~$\ref{lem:nonprime}$]
Assume for a contradiction that there is proper coloring $c$ of $\KG^{r'r''}(\hyp_1)\times\cdots\times\KG^{r'r''}(\hyp_t)$ with $C$ colors such that $\cd_{r'r''}(\hyp_{\ell})>(r'r''-1)C$ for all $\ell$. Lemma~\ref{lem:th} ensures then that each $\mathcal{T}_{\hyp_{\ell},C,r'}$ has at least one edge. Let $(e_1,\ldots,e_t)\in\mathcal{T}_{\hyp_1,C,r'}\times\cdots\times\mathcal{T}_{\hyp_t,C,r'}$. The inequality $\chi(\KG^{r'}(\hyp_1[e_1])\times\cdots\times\KG^{r'}(\hyp_t[e_t]))>C$ holds because Theorem~\ref{thm:main} holds for $r'$. There is thus a monochromatic edge in $\KG^{r'}(\hyp_1[e_1])\times\cdots\times\KG^{r'}(\hyp_t[e_t])$.
We define $h(e_1,\ldots,e_t)$ to be the color of such an edge. Since $c$ is a proper coloring of $\KG^{r'r''}(\hyp_1)\times\cdots\times\KG^{r'r''}(\hyp_t)$, the map $h$ is a proper coloring of $\KG^{r''}(\mathcal{T}_{\hyp_1,C,r'})\times\cdots\times\KG^{r''}(\mathcal{T}_{\hyp_t,C,r'})$ with $C$ colors. Since Theorem~\ref{thm:main} holds for $r''$, we have
$$C\geq \frac{1}{r''-1}\min_{\ell=1,\ldots,t}\cd_{r''}(\mathcal{T}_{\hyp_{\ell},C,r'}).$$ Lemma~\ref{lem:th} implies via a direct calculation that $(r'r''-1)C\geq\min_{\ell\in[t]}\cd_{r'r''}(\hyp_{\ell})$, which is in contradiction with the starting assumption.
\end{proof}

\subsection{Proof of the main result when $r$ is a prime number}\label{subsec:comb_topo}

The proof of Lemma~\ref{lem:main} makes use of a ``$Z_p$-Tucker lemma'' proposed in~\cite{Me11} as a slight generalization of a ``$Z_p$-Tucker lemma'' used by Ziegler~\cite{Zi02} in a combinatorial proof of K\v{r}\'i\v{z}'s theorem (proof inspired by Matou\v{s}ek's proof of the special case of usual Kneser graphs~\cite{Mat04}). Before stating it, we introduce some notations.

We denote by $Z_r$ the cyclic and multiplicative group of the $r$th roots of unity. We denote by $\omega$ one of its generators. Let $\x=(x_1,\ldots,x_n)$ and $\x'=(x'_1,\ldots,x'_n)$ be two vectors of $(Z_r\cup\{0\})^{n}$. By the notation ``$\x\subseteq\x'$'', we mean that the following implication holds for all $i\in[n]$ $$x_i\neq 0\quad\Longrightarrow\quad x'_i=x_i.$$ The number of nonzero components of a vector $\x$ is denoted $|\x|$. We introduce also the notation $\supp_j(\x)$ for the set of indices $i$ such that $x_i=\omega^j$. Note that we have $|\x|=\sum_{j=1}^r|\supp_j(\x)|$.

For $\x\in(Z_r\cup\{0\})^{n}\setminus\{(0,\ldots,0)\}$, define $\omega\cdot\x=(\omega x_1,\ldots,\omega x_n)$. It defines a free action of $Z_r$ on $(Z_r\cup\{0\})^{n}\setminus\{(0,\ldots,0)\}$. For $(\omega^j,k)\in Z_r\times[m]$, where $m$ is a positive integer, we define $\omega\cdot(\omega^j,k)$ to be $(\omega^{j+1},k)$. It defines a free action of $Z_r$ on $Z_r\times[m]$. A map between two sets on which $Z_r$ acts freely is {\em equivariant} if it commutes with the action of $Z_r$. 

\begin{lemma}[$Z_p$-Tucker lemma]\label{lem:zptucker} Let $p$ be a prime number, $n,m\geq 1$, $\alpha\leq m$ and let
$$\begin{array}{lccc}\lambda:
& (Z_p\cup\{0\})^{n}\setminus\{(0,\ldots,0)\} &
\longrightarrow & Z_p\times [m] \\
& \x & \longmapsto & (s(\x),v(\x))
\end{array}$$ be a $Z_p$-equivariant map satisfying the following two properties:
\begin{itemize}
\item for all $\x^{(1)}\subseteq \x^{(2)}\in(Z_p\cup\{0\})^{n}\setminus\{(0,\ldots,0)\}$, if $v(\x^{(1)})=v(\x^{(2)})\leq\alpha$, then $s(\x^{(1)})=s(\x^{(2)})$;
\item
for all
$\x^{(1)}\subseteq \x^{(2)}\subseteq \cdots \subseteq \x^{(p)}\in(Z_p\cup\{0\})^{n}\setminus\{(0,\ldots,0)\}$, if $v(\x^{(1)})=v(\x^{(2)})=\cdots=v(\x^{(p)})\geq\alpha+1$, then the $s(\x^{(i)})$ are not pairwise distinct for $i=1,\ldots,p$. 
\end{itemize}

Then $\alpha+(m-\alpha)(p-1)\geq n$.
\end{lemma}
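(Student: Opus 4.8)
The plan is to argue topologically, by contradiction, in the spirit of the combinatorial proofs of K\v{r}\'i\v{z}'s theorem due to Matou\v{s}ek and Ziegler. The first thing to notice is that $\left((Z_p\cup\{0\})^{n}\setminus\{(0,\ldots,0)\},\subseteq\right)$ is exactly the face poset of the simplicial complex $(Z_p)^{*n}$, the join of $n$ copies of the discrete $p$-point set $Z_p$ (the $i$-th copy recording the value of the $i$-th coordinate). This complex has dimension $n-1$, is $(n-2)$-connected, and coordinatewise multiplication by $\omega$ is a free simplicial $Z_p$-action whose face relation is precisely ``$\subseteq$''. Hence $\lambda$ is nothing but a $Z_p$-equivariant map defined on the vertices of the barycentric subdivision $\mathrm{sd}\left((Z_p)^{*n}\right)$. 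From $\lambda$ I would build a $Z_p$-equivariant simplicial map into a suitable free $Z_p$-complex $T$ with $\dim T=\alpha+(m-\alpha)(p-1)-1$, and then invoke Dold's theorem to deduce $\alpha+(m-\alpha)(p-1)-1\geq n-1$.

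Everything rests on the choice of $T$. Writing $N=\alpha+(m-\alpha)(p-1)$, I would set
$$T=\big(\underbrace{Z_p*\cdots*Z_p}_{\alpha}\big)*\big(\underbrace{\partial\Delta^{p-1}*\cdots*\partial\Delta^{p-1}}_{m-\alpha}\big),$$
where $Z_p$ is the $p$-point discrete space and $\partial\Delta^{p-1}$ is the boundary of the $(p-1)$-simplex with vertex set $Z_p$, that is, the complex of all proper subsets of $Z_p$. Each $Z_p$-factor carries multiplication by $\omega$, each $\partial\Delta^{p-1}$-factor carries the cyclic permutation $g\mapsto\omega g$ of its $p$ vertices, and $T$ carries the diagonal action; one checks readily that this action is free, that $\dim T=N-1$, and -- since $Z_p$ is $(-1)$-connected and $\partial\Delta^{p-1}\cong S^{p-2}$ is $(p-3)$-connected -- that $T$ is $(N-2)$-connected by the join formula for connectivity. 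I would regard the $m$ join factors as ``columns'', in bijection with the $m$ columns of the codomain $Z_p\times[m]$ of $\lambda$. The map $g$ is then defined on the vertices of $\mathrm{sd}\left((Z_p)^{*n}\right)$ with no arbitrary choice at all: send $\x$ to the vertex $s(\x)\in Z_p$ lying in the $v(\x)$-th column of $T$.

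The remaining, and only genuinely delicate, task is to verify that $g$ is a well-defined $Z_p$-equivariant simplicial map, and this is exactly where the two properties of $\lambda$ enter. Equivariance is immediate from $\lambda(\omega\cdot\x)=(\omega s(\x),v(\x))$ and the definition of the action on $T$. For simpliciality, a simplex of $\mathrm{sd}\left((Z_p)^{*n}\right)$ is a chain $\x^{(1)}\subseteq\cdots\subseteq\x^{(q)}$, and its image is a face of $T$ iff, for every column $k$, the set $\{s(\x^{(i)}):v(\x^{(i)})=k\}$ is a face of the $k$-th factor. For $k\leq\alpha$ the factor is the discrete $Z_p$, whose faces are singletons, so one needs all the $\x^{(i)}$ with $v(\x^{(i)})=k$ to carry the same value $s(\x^{(i)})$: this is the first property, applied along the subchain formed by those $\x^{(i)}$. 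For $k\geq\alpha+1$ the factor is $\partial\Delta^{p-1}$, whose faces are the proper subsets of $Z_p$, so one needs at most $p-1$ distinct values among the $s(\x^{(i)})$ with $v(\x^{(i)})=k$; if there were $p$ of them, picking one $\x^{(i)}$ realizing each value would produce a chain of length $p$ with constant $v(\x^{(i)})\geq\alpha+1$ and pairwise distinct $s(\x^{(i)})$, contradicting the second property. So $g$ gives a $Z_p$-map from $\mathrm{sd}\left((Z_p)^{*n}\right)\cong(Z_p)^{*n}$, which is $(n-2)$-connected, to the free $Z_p$-complex $T$ of dimension $N-1$; Dold's theorem rules this out when $N-1\leq n-2$, so $N\geq n$, which is the claimed inequality. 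I expect no single hard step here: the work is to pin down $T$ correctly and to match the two hypotheses on $\lambda$ to the two kinds of columns. The case $p=2$, $\alpha=0$ -- where $T=S^{m-1}$, $g=\lambda$, and Dold's theorem is the Borsuk--Ulam theorem -- recovers the classical Tucker lemma and serves as a sanity check.
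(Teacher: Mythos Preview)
The paper does not prove this lemma; it quotes it from~\cite{Me11} (a slight generalization of Ziegler's version in~\cite{Zi02}) and uses it as a black box. Your proposal is essentially the standard proof one finds in those references: realize the domain as the barycentric subdivision of $(Z_p)^{*n}$, build the target $T$ as a join of $\alpha$ discrete copies of $Z_p$ and $m-\alpha$ copies of $\partial\Delta^{p-1}$, use the two hypotheses on $\lambda$ to verify simpliciality column by column, and conclude via Dold's theorem. The dimension and free-action checks are correct (the primality of $p$ is what makes the cyclic action on $\partial\Delta^{p-1}$ free), and your matching of the two hypotheses to the two column types is exactly the point of the construction. One minor remark: the $(N-2)$-connectedness of $T$ that you compute is true but not needed---Dold's theorem only requires the connectivity of the domain and the dimension and freeness of the target.
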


The second ingredient of the proof is a $Z_p$-equivariant map $$\varepsilon:(2^{Z_p}\times\cdots\times 2^{Z_p})\setminus\left(\{\emptyset,Z_p\}\times\cdots\times\{\emptyset,Z_p\}\right)\longrightarrow Z_p,$$ where we assume $p$ to be a prime number. 
For $B\subseteq Z_p$, we define $\omega\cdot B$ to be $\cup_{b\in B}\{\omega b\}$. Since $p$ is a prime number, it defines a free action of $Z_p$ on $2^{Z_p}\setminus\{\emptyset,Z_p\}$. We extend the action of $Z_p$ on $2^{Z_p}\times\cdots\times 2^{Z_p}\setminus(\{\emptyset,Z_p\}\times\cdots\times\{\emptyset,Z_p\})$ by defining $\omega\cdot(B_1,\ldots,B_t)$ to be $(\omega\cdot B_1,\ldots,\omega\cdot B_t)$. This action is again free. We can thus define such a $Z_p$-equivariant map $\varepsilon(\cdot)$, which gives a
``sign'' to each $t$-tuple $(B_1,\ldots,B_t)\in 2^{Z_p}\times\cdots\times 2^{Z_p}$ such that at least one of the $B_{\ell}$'s is not in $\{\emptyset,Z_p\}$.

\begin{proof}[Proof of Lemma~$\ref{lem:main}$]
Without loss of generality, we assume that
$$\cd_p(\hyp_1)=\min_{\ell=1,\ldots,t}\cd_p(\hyp_{\ell}).$$ 

Denote by $V_{\ell}$ and by $E_{\ell}$ respectively the vertex set and the edge set of $\hyp_{\ell}$. The cardinality of $V_{\ell}$ is denoted by $n_{\ell}$ and we arbitrarily identify $V_{\ell}$ and $[n_{\ell}]$. Let $c:E_1\times\cdots\times E_t\rightarrow[C]$ be a proper coloring of $\KG^p(\hyp_1)\times\cdots\times\KG^p(\hyp_t)$ with $C$ colors.
We endow $E_1\times\cdots\times E_t$ with an arbitrary total order $\preceq$ such that $(S_1,\ldots,S_t)\preceq (T_1,\ldots,T_t)$ if $c(S_1,\ldots,S_t)<c(T_1,\ldots,T_t)$. We shall apply the $Z_p$-Tucker lemma (Lemma~\ref{lem:zptucker}) with $n=\sum_{\ell=1}^tn_{\ell}$, $\alpha=n-\cd_p(\hyp_1)+p-1$, and $m=\alpha+C-1$. The lower bound on $C$, and thus on the chromatic number, will be a direct consequence of the inequality $\alpha+(m-\alpha)(p-1)\geq n$. To that purpose, we define a map $\lambda:\x\in(Z_p\cup\{0\})^{n}\mapsto(s(\x),v(\x))\in Z_p\times[m]$. 

Let $\x\in(Z_p\cup\{0\})^{n}\setminus\{(0,\ldots,0)\}$. From this $\x$, we define $t$ vectors $\y_1,\ldots,\y_t$ as follows: the vector $\y_1$ is the vector made of the first $n_1$ entries of $\x$, the vector $\y_2$ is the vector made of the following $n_2$ entries of $\x$,... and the vector $\y_t$ is made of the last $n_t$ entries of $\x$. The vector $\y_{\ell}$ is thus an element of $(Z_p\cup\{0\})^{n_{\ell}}$. We define also $A_{\ell}$ to be the set of $\omega^j$ such that $\supp_j(\y_{\ell})$ contains at least one edge of $E_{\ell}$. 
Two cases have to be distinguished. \\

\noindent{\bf First case: $A_{\ell}\neq Z_p$ for at least one $\ell\in[t]$.} We define
$$\begin{array}{rcl} v(\x) & = & \ds{\sum_{\ell:\,A_{\ell}\in\{\emptyset,Z_p\}}|\y_{\ell}|} \\ \\&&+\ds{\sum_{\ell:\,A_{\ell}\notin\{\emptyset,Z_p\}}\left(|A_{\ell}|+\max\left\{|\widetilde{\y}_{\ell}|:\,\widetilde{\y}_{\ell}\subseteq\y_{\ell}\mbox{ and }E(\hyp_{\ell}[\supp_j(\widetilde{\y}_{\ell})])=\emptyset\mbox{ for all $j$})\right\}\right)}.\end{array}$$
If $A_{\ell}\in\{\emptyset,Z_p\}$ for all $\ell\in[t]$, we define $s(\x)$ to be the first nonzero entry of $\x$. Otherwise, we define $s(\x)$ to be $\varepsilon(A_1,\ldots,A_t)$. We always have  in this case $1\leq v(\x)\leq\alpha$.\\

\noindent {\bf Second case: $A_{\ell}=Z_p$ for all $\ell\in[t]$.} For each $j$, there is at least one $t$-tuple $(S_1(j),\ldots,S_t(j))\in E_1\times\cdots\times E_t$ with $S_{\ell}(j)\subseteq \supp_j(\y_{\ell})$ for all $\ell\in[t]$. Among all such $t$-tuples and over all $j$, select the one that is minimal for $\preceq$. We define then $v(\x)$ to be $\alpha+c(S_1(j),\ldots,S_t(j))$ and $s(\x)$ to be $\omega^j$ with $j\in[p]$ such that $(S_1(j),\ldots,S_t(j))$ has been chosen. Note that because of the definition of $\preceq$ and because $c(\cdot)$ is a proper coloring, $c(S_1(j),\ldots,S_t(j))\neq C$, and thus $\alpha+1\leq v(\x)\leq\alpha+C-1$, as required. \\

The fact that such a map $\lambda$ is $Z_p$-equivariant can be easily checked. It remains to check that this map satisfies the two required properties for the application of $Z_p$-Tucker lemma.

Let $\x\in(Z_p\cup\{0\})^{n}\setminus\{(0,\ldots,0)\}$ such that $v(\x)\leq\alpha$. We are necessarily in the first case. We seek the condition under which making one of its zero components a nonzero one does not modify $v(\x)$.  Such a transformation cannot concern an entry in a $\y_{\ell}$ with $\ell$ such that $A_{\ell}=\emptyset$, nor it can concern an entry in a $\y_{\ell}$ with $\ell$ such that $A_{\ell}=Z_p$. It remains to see what happens when the transformation concerns an entry in a $\y_{\ell}$ with $\ell$ such that $A_{\ell}\notin\{\emptyset,Z_p\}$. If $A_{\ell}$ increases its cardinality by one because of this transformation, then $v(\x)$ increases by at least one. Hence, making a zero component of $\x$ a nonzero one modifies $v(\x)$ only if at least one of the $A_{\ell}$ is not in $\{\emptyset,Z_p\}$ and if none of the $A_{\ell}$'s are modified by this transformation. Therefore, making a zero component of $\x$ an nonzero one while not modifying $v(\x)$ does not modify $s(\x)$ as well. For $\x\subseteq \x'\in(Z_p\cup\{0\})^{n}\setminus\{(0,\ldots,0)\}$ such that $v(\x')\leq\alpha$, we necessarily have $v(\x)\leq v(\x')$. Thus, according to the discussion we just proposed, if $v(\x^{(1)})=v(\x^{(2)})\leq\alpha$ with $\x^{(1)}\subseteq\x^{(2)}$, we have $s(\x^{(1)})=s(\x^{(2)})$.

Let $\x^{(1)}\subseteq \x^{(2)}\subseteq \cdots \subseteq \x^{(p)}\in(Z_p\cup\{0\})^{n}\setminus\{(0,\ldots,0)\}$ be such that $v(\x^{(1)})=v(\x^{(2)})=\cdots=v(\x^{(p)})\geq\alpha+1$. We are necessarily in the second case. Define $(S_1^{(i)},\ldots,S_t^{(i)})$ to be the $t$-tuple used in the computation of $v(\x^{(i)})$. Suppose for a contradiction that the $s(\x^{(i)})$'s are pairwise distinct for $i=1,\ldots,p$. Then the $(S_1^{(i)},\ldots,S_t^{(i)})$'s form an edge of $\KG^p(\hyp_1)\times\cdots\times\KG^p(\hyp_t)$ while getting the same color by $c(\cdot)$ since $v(\x^{(1)})=v(\x^{(2)})=\cdots=v(\x^{(p)})$. It contradicts the fact that $c(\cdot)$ is a proper coloring. Hence, the $s(\x^{(i)})$ are not pairwise distinct for $i=1,\ldots,p$.
\end{proof}

\section{Improvements via alternation number}

\subsection{Main theorem involving the alternation number}

An {\it alternating sequence} is a sequence $x_1,x_2,\ldots,x_m\in Z_r$ such that any two consecutive terms are different. For any $\x=(x_1,x_2,\ldots,x_n)\in (Z_r\cup\{0\})^{n}$ and any permutation $\pi\in\mathcal{S}_n$, we denote by $\alt_{\pi}(\x)$ the maximum length of an alternating subsequence of the sequence $x_{\pi(1)},\ldots,x_{\pi(n)}$. Note that by definition this subsequence uses only elements of $Z_r$. In particular, if $\x=(0,\ldots,0)$, then $\alt_{\pi}(\x)=0$ for any permutation $\pi$.

Let $\hyp=(V,E)$ be a hypergraph with $n$ vertices. We identify $V$ and $[n]$. The {\em $r$-alternation number} $\alt_r(\hyp)$ of $\hyp$ is defined as
$$\alt_r(\hyp)=\min_{\pi\in\mathcal{S}_n}\max\{\alt_{\pi}(\x):\,\x\in(Z_r\cup\{0\})^n\mbox{ with }\,E(\hyp[\supp_j(\x)])=\emptyset\mbox{ for $j=1,\ldots,r$}\}.$$
In other words, for each permutation $\pi$ of $[n]$, we choose $\x$ such that $\alt_{\pi}(\x)$ is maximal while none of the $\supp_j(\x)$'s contain an edge of $\hyp$; then, we take the permutation for which this quantity is minimal.
This number does not depend on the way $V$ and $[n]$ have been identified.

We clearly have $|V|-\alt_r(\hyp)\geq\cd_r(\hyp)$. Theorem~\ref{thm:main} can now be improved with the help of the alternation number.

\begin{theorem}\label{thm:main_bis}
Let $\hyp_1,\ldots,\hyp_t$ be hypergraphs and $r\geq 2$ be a positive integer. If none of the $\hyp_{\ell}$'s have $\emptyset$ as an edge, then 
$$\chi\left(\KG^r(\hyp_1)\times\cdots\times\KG^r(\hyp_t)\right)\geq\frac{1}{r-1}\min_{\ell=1,\ldots,t}\left(|V(\hyp_{\ell})|-\alt_r(\hyp_{\ell})\right).$$
\end{theorem}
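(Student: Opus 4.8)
The plan is to mirror the proof of Theorem~\ref{thm:main} almost verbatim, replacing the colorability defect by the alternation number throughout. First I would establish the analogue of Lemma~\ref{lem:nonprime}: if Theorem~\ref{thm:main_bis} holds for $r'$ and for $r''$, then it holds for $r=r'r''$. For this I expect to need a variant of Lemma~\ref{lem:th} bounding $|V(\hyp)|-\alt_{rs}(\hyp)$ in terms of $|V(\hyp)|-\alt_r(\mathcal{T})$ for a suitably redefined auxiliary hypergraph $\mathcal{T}$ whose edges are the sets $A$ with $|A|-\alt_s(\hyp[A])>(s-1)C$ (one must be a little careful, since removing vertices from a set interacts with alternation along a fixed permutation; but choosing the permutation on $V(\hyp)$ to realize $\alt_{rs}$ and reading off the induced orders on the colour classes $Y_j$ should work). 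The reduction argument to a prime $r$ is then formally identical to the proof of Lemma~\ref{lem:nonprime}, using this alternation-number version of Lemma~\ref{lem:th} in place of the original.

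The substantial part is the prime case, i.e. the analogue of Lemma~\ref{lem:main} with $\cd_p(\hyp_\ell)$ replaced by $|V(\hyp_\ell)|-\alt_p(\hyp_\ell)$. Here I would not work on the full cube $(Z_p\cup\{0\})^n$ but instead fix, for each $\ell$, a permutation $\pi_\ell$ of $V(\hyp_\ell)=[n_\ell]$ attaining the minimum in the definition of $\alt_p(\hyp_\ell)$, and restrict attention to those $\x$ whose $\ell$th block $\y_\ell$ is, after reordering by $\pi_\ell$, an \emph{alternating} vector — that is, $\x$ ranges over the subcomplex of the $Z_p$-cube corresponding to $(Z_p\cup\{0\})^{\sum(|V(\hyp_\ell)|-\alt_p(\hyp_\ell))}$ sitting inside via the $\pi_\ell$'s, as in the alternation-number refinements of Ziegler's proof. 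Concretely, I would compose the $\lambda$ from the proof of Lemma~\ref{lem:main} with the natural inclusion of a lower-dimensional $Z_p$-cube, set $n'=\sum_{\ell=1}^t\bigl(|V(\hyp_\ell)|-\alt_p(\hyp_\ell)\bigr)$, keep $\alpha=n'-(|V(\hyp_1)|-\alt_p(\hyp_1))+p-1$ (assuming WLOG the minimum is attained at $\ell=1$) and $m=\alpha+C-1$, and check the two hypotheses of the $Z_p$-Tucker lemma exactly as before. The definition of $v(\x)$ using $\max\{|\widetilde{\y}_\ell|:\widetilde{\y}_\ell\subseteq\y_\ell,\ E(\hyp_\ell[\supp_j(\widetilde{\y}_\ell)])=\emptyset\}$ was already tailored to this: the point is that on an alternating vector of length $|V(\hyp_\ell)|-\alt_p(\hyp_\ell)$, once every $\supp_j$ contains an edge we have reached the second case, and the bookkeeping on $v$ stays within the required range $[1,\alpha]\cup[\alpha+1,\alpha+C-1]$. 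The $Z_p$-Tucker inequality $\alpha+(m-\alpha)(p-1)\ge n'$ then rearranges to $C\ge\frac{1}{p-1}(|V(\hyp_1)|-\alt_p(\hyp_1))$, which is the claim.

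Finally, combining the alternation-number analogues of Lemmas~\ref{lem:nonprime} and~\ref{lem:main} proves Theorem~\ref{thm:main_bis}; and since $|V(\hyp)|-\alt_r(\hyp)\ge\cd_r(\hyp)$, Theorem~\ref{thm:main} is recovered as a special case. The main obstacle I anticipate is the composite-$r$ reduction: the quantity $|V(\hyp)|-\alt_r(\hyp)$ does not localize to induced subhypergraphs as cleanly as $\cd_r$ does, so formulating the right auxiliary hypergraph $\mathcal{T}$ and verifying an inequality of the shape $|V(\hyp)|-\alt_{rs}(\hyp)\le r(s-1)C+\bigl(|V(\mathcal{T})|-\alt_r(\mathcal{T})\bigr)$ will require choosing permutations compatibly and arguing that an alternating sequence for $\hyp$ under $\pi$ restricts to alternating sequences on each colour class; this is where I would spend most of the care. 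Everything else is a routine transcription of Section~\ref{subsec:comb_topo}, with the cube dimension lowered via the $\alt$-minimizing permutations.
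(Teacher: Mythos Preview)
Your outline of the composite reduction is exactly what the paper does: it defines the auxiliary hypergraph $\widetilde{\mathcal{T}}_{\hyp,C,s}$ with edges the sets $A$ satisfying $|A|-\alt_s(\hyp[A])>(s-1)C$, proves the inequality $\alt_r(\widetilde{\mathcal{T}}_{\hyp,C,s})\le r(s-1)C+\alt_{rs}(\hyp)$ (Lemma~\ref{lem:th_bis}), and then copies the proof of Lemma~\ref{lem:nonprime} verbatim. The paper treats this as routine, so your expectation that it is the main obstacle is backwards.

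The genuine gap is in your treatment of the prime case. The set of vectors in $(Z_p\cup\{0\})^{n_\ell}$ that are alternating with respect to a fixed permutation is \emph{not} a subcube, and there is no ``natural inclusion of a lower-dimensional $Z_p$-cube'' into it that is simultaneously $Z_p$-equivariant and monotone for $\subseteq$; both properties are needed to transport the hypotheses of Lemma~\ref{lem:zptucker} through a composition. So you cannot reduce the dimension to your $n'$ by composing the old $\lambda$ with such a map. The paper proceeds differently: it keeps the full cube $(Z_p\cup\{0\})^n$ with $n=\sum_\ell n_\ell$ and $\alpha=n-n_1+\alt_p(\hyp_1)+p-1$, identifies each $V_\ell$ with $[n_\ell]$ so that the identity permutation realises $\alt_p(\hyp_\ell)$, and \emph{modifies the formula for $v(\x)$ in the first case} by replacing cardinalities with alternation lengths, namely $|\y_\ell|$ by $\alt_{\id}(\y_\ell)$ when $A_\ell=\emptyset$ and $\max\{|\widetilde\y_\ell|:\ldots\}$ by $\max\{\alt_{\id}(\widetilde\y_\ell):\ldots\}$ when $A_\ell\notin\{\emptyset,Z_p\}$. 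This is precisely what forces $v(\x)\le\alpha$ in that case. Hence your claim that ``the definition of $v(\x)$ \ldots\ was already tailored to this'' and carries over unchanged is wrong; replacing $|\cdot|$ by $\alt_{\id}(\cdot)$ in $v$ is the actual mechanism that upgrades the bound from $\cd_p$ to $|V|-\alt_p$, and it is done on the full cube rather than on a restricted domain.
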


This theorem implies that Conjecture~\ref{conj:zhu} holds for Kneser hypergraphs $\KG^r(\hyp)$ of same rank whose chromatic number equals $\left\lceil\frac{1}{r-1}\left(|V(\hyp)|-\alt_r(\hyp)\right)\right\rceil$. The {\em $2$-stable Kneser hypergraphs} are such hypergraphs provided that $r-1$ do not divide $n-k$ (proved in~\cite{AlHa13}). These hypergraphs, denoted $\KG^r(n,k)_{2\mbox{\tiny{-stab}}}$, are the Kneser hypergraphs whose vertices are the $k$-subsets $A$ of $[n]$ no elements of which are cyclically adjacent (if $i\neq i'$ are both in $A$, then $2\leq |i-i'|\leq n-2$). Other examples of such hypergraphs $\hyp$ having their chromatic number equaling $\left\lceil\frac{1}{r-1}\left(|V(\hyp)|-\alt_r(\hyp)\right)\right\rceil$ are given by some {\em multiple Kneser hypergraphs}, see \cite{AlHa13} for more details.

The proof of Theorem~\ref{thm:main_bis} follows the same lines as the proof of Theorem~\ref{thm:main} given in Section~\ref{sec:main}. We have lemmas similar to Lemmas~\ref{lem:nonprime} and~\ref{lem:main}.

\begin{lemma}\label{lem:nonprime_bis}
Let $r'$ and $r''$ be two positive integers. If Theorem~$\ref{thm:main_bis}$ holds for both $r'$ and $r''$, then it holds also for $r=r'r''$.
\end{lemma}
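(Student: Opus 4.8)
The plan is to mimic the proof of Lemma~\ref{lem:nonprime}, replacing everywhere the colorability defect $\cd_r$ by the quantity $|V(\hyp)|-\alt_r(\hyp)$, and to find the right analogue of the auxiliary hypergraph $\mathcal{T}_{\hyp,C,s}$ and of the inequality of Lemma~\ref{lem:th}. So the first step is to define, for a hypergraph $\hyp$ and positive integers $s$ and $C$, an auxiliary hypergraph $\mathcal{T}^{\alt}_{\hyp,C,s}$ on vertex set $V(\hyp)$ whose edges are exactly those $A\subseteq V(\hyp)$ such that $|A|-\alt_s(\hyp[A])>(s-1)C$. The second step is to prove the analogue of Lemma~\ref{lem:th}, namely an inequality of the form
$$|V(\hyp)|-\alt_{rs}(\hyp)\leq r(s-1)C+\bigl(|V(\mathcal{T}^{\alt}_{\hyp,C,s})|-\alt_r(\mathcal{T}^{\alt}_{\hyp,C,s})\bigr).$$
The point is that $V(\mathcal{T}^{\alt}_{\hyp,C,s})=V(\hyp)$, so the right-hand side is just $r(s-1)C+|V(\hyp)|-\alt_r(\mathcal{T}^{\alt}_{\hyp,C,s})$.

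For this inequality I would argue as in the proof of Lemma~\ref{lem:th}: fix a permutation $\pi$ of $V(\hyp)=[n]$ achieving the minimum in the definition of $\alt_r(\mathcal{T}^{\alt}_{\hyp,C,s})$, and fix a vector $\x\in(Z_r\cup\{0\})^n$ with $E(\mathcal{T}^{\alt}_{\hyp,C,s}[\supp_j(\x)])=\emptyset$ for all $j\in[r]$ and with $\alt_\pi(\x)=\alt_r(\mathcal{T}^{\alt}_{\hyp,C,s})$. Because no $\supp_j(\x)$ is an edge of $\mathcal{T}^{\alt}_{\hyp,C,s}$, each $X_j:=\supp_j(\x)$ satisfies $|X_j|-\alt_s(\hyp[X_j])\leq (s-1)C$. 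Using the permutation $\pi$ restricted to $X_j$ to witness $\alt_s(\hyp[X_j])$, one can pick for each $j$ a vector $\z_j\in(Z_s\cup\{0\})^{X_j}$ whose supports avoid edges of $\hyp$ and whose alternation number along $\pi$ equals $\alt_s(\hyp[X_j])$; the number of coordinates of $X_j$ not used by $\z_j$ is at most $(s-1)C$. One then assembles a vector in $(Z_{rs}\cup\{0\})^n$ by reading $\x$ along $\pi$ and replacing the block of coordinates equal to $\omega^j$ by the corresponding $\z_j$ (identifying $Z_{rs}$ with $Z_r\times Z_s$), so that the resulting alternating subsequence has length at least $\sum_j \alt_s(\hyp[X_j])\geq \bigl(\sum_j |X_j|\bigr) - r(s-1)C = |\x| - r(s-1)C$, and moreover $|\x|$ is at least the number of coordinates in the alternating subsequence witnessing $\alt_\pi(\x)$ for $\mathcal{T}^{\alt}_{\hyp,C,s}$. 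Rearranging and taking the minimum over permutations on the left gives the desired inequality; the slightly delicate point is that $\alt$ along a fixed permutation really does split additively over the blocks $X_j$, which is why working with the fixed $\pi$ throughout is essential.

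With this analogue of Lemma~\ref{lem:th} in hand, the proof of Lemma~\ref{lem:nonprime_bis} is a carbon copy of that of Lemma~\ref{lem:nonprime}: assume for contradiction a proper coloring $c$ of $\KG^{r'r''}(\hyp_1)\times\cdots\times\KG^{r'r''}(\hyp_t)$ with $C$ colors and $|V(\hyp_\ell)|-\alt_{r'r''}(\hyp_\ell)>(r'r''-1)C$ for all $\ell$; deduce from the inequality above that each $\mathcal{T}^{\alt}_{\hyp_\ell,C,r'}$ has an edge; for $(e_1,\ldots,e_t)$ with $e_\ell\in E(\mathcal{T}^{\alt}_{\hyp_\ell,C,r'})$, use the validity of Theorem~\ref{thm:main_bis} for $r'$ to get a monochromatic edge in $\KG^{r'}(\hyp_1[e_1])\times\cdots\times\KG^{r'}(\hyp_t[e_t])$ whose color defines $h(e_1,\ldots,e_t)$; check that $h$ is a proper coloring of $\KG^{r''}(\mathcal{T}^{\alt}_{\hyp_1,C,r'})\times\cdots\times\KG^{r''}(\mathcal{T}^{\alt}_{\hyp_t,C,r'})$ with $C$ colors; apply Theorem~\ref{thm:main_bis} for $r''$ and then the inequality above to contradict the starting assumption via a direct calculation. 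The main obstacle is entirely concentrated in the second step — getting the right $\alt$-analogue of Lemma~\ref{lem:th}, and in particular verifying that the alternation count behaves additively when one refines a $Z_r$-coloring of $[n]$ into a $Z_{rs}$-coloring using the same underlying permutation; once that combinatorial bookkeeping is settled, everything else is formal.
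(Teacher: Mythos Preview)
Your approach is exactly the one the paper takes: you define the same auxiliary hypergraph (the paper calls it $\widetilde{\mathcal{T}}_{\hyp,C,s}$), you aim for the same technical inequality (this is the paper's Lemma~\ref{lem:th_bis}, namely $\alt_r(\widetilde{\mathcal{T}}_{\hyp,C,s})\leq r(s-1)C+\alt_{rs}(\hyp)$, which is your inequality after cancelling $|V(\hyp)|$ from both sides), and you then copy the proof of Lemma~\ref{lem:nonprime} verbatim. The paper in fact omits both proofs as ``very similar'' to the $\cd$ case, so your reconstruction is exactly what the authors intended, and the combinatorial observation that the concatenated alternating subsequences from the $\z_j$'s form a single alternating subsequence in $Z_r\times Z_s$ (different $Z_s$-component within a block, different $Z_r$-component across blocks) is the right key point.

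There is, however, a quantifier slip in your sketch of the $\alt$-analogue of Lemma~\ref{lem:th}. You fix $\pi$ to be a minimizer for $\alt_r(\mathcal{T}^{\alt}_{\hyp,C,s})$; you should instead fix $\pi$ to be a minimizer for $\alt_{rs}(\hyp)$. The trouble is in the last step: from the assembled vector $\y\in(Z_{rs}\cup\{0\})^n$ you correctly obtain $\alt_\pi(\y)\geq|\x|-r(s-1)C$, and you then need to cap $\alt_\pi(\y)$ by $\alt_{rs}(\hyp)$. But $\alt_{rs}(\hyp)$ is a \emph{minimum} over permutations, so $\alt_\pi(\y)\leq\alt_{rs}(\hyp)$ is only guaranteed when $\pi$ realizes that minimum; with your choice of $\pi$ the inequality can fail, and ``taking the minimum over permutations on the left'' does not help because both $\x$ and $\y$ depend on $\pi$. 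With the correct choice of $\pi$, the argument runs cleanly: for \emph{every} $\x$ whose $\supp_j$'s avoid edges of $\mathcal{T}^{\alt}_{\hyp,C,s}$, build $\y$ as you describe (noting that $\alt_{\pi|_{X_j}}(\z_j)\geq\alt_s(\hyp[X_j])$, not necessarily with equality); then
\[
\alt_\pi(\x)\;\leq\;|\x|\;\leq\;\alt_\pi(\y)+r(s-1)C\;\leq\;\alt_{rs}(\hyp)+r(s-1)C,
\]
whence $\alt_r(\mathcal{T}^{\alt}_{\hyp,C,s})\leq\max_\x\alt_\pi(\x)\leq r(s-1)C+\alt_{rs}(\hyp)$, as required.
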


\begin{lemma}\label{lem:main_bis}
Let $\hyp_1,\ldots,\hyp_t$ be hypergraphs and $p$ be a prime number. If none of the $\hyp_{\ell}$'s have $\emptyset$ as an edge, then 
$$\chi\left(\KG^p(\hyp_1)\times\cdots\times\KG^p(\hyp_t)\right)\geq\frac{1}{p-1}\min_{\ell=1,\ldots,t}\left(|V(\hyp_{\ell})|-\alt_p(\hyp_{\ell})\right).$$
\end{lemma}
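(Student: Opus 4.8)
The plan is to mimic the proof of Lemma~\ref{lem:main}, replacing the role of the colorability defect by the alternation number and, accordingly, adapting the parameters fed into the $Z_p$-Tucker lemma. Assume again without loss of generality that $|V(\hyp_1)|-\alt_p(\hyp_1)=\min_{\ell}(|V(\hyp_{\ell})|-\alt_p(\hyp_{\ell}))$, and fix for each $\ell$ a permutation $\pi_{\ell}$ of $V_{\ell}=[n_{\ell}]$ achieving the minimum in the definition of $\alt_p(\hyp_{\ell})$. The key new device is to let the $Z_p$-Tucker map act not on $\x$ directly but on the reordered coordinates: given $\x\in(Z_p\cup\{0\})^n\setminus\{\boldsymbol 0\}$ with $n=\sum_{\ell}n_{\ell}$, split $\x$ into blocks $\y_1,\ldots,\y_t$ as before, and within block $\ell$ read the entries in the order $\pi_{\ell}(1),\pi_{\ell}(2),\ldots,\pi_{\ell}(n_{\ell})$ to form the alternating structure. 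Define $A_{\ell}\subseteq Z_p$ exactly as in Lemma~\ref{lem:main}: $\omega^j\in A_{\ell}$ iff $\supp_j(\y_{\ell})$ contains an edge of $E_{\ell}$. The parameters will be $\alpha=\max_{\ell}\alt_p(\hyp_{\ell})+\bigl(n-\min_{\ell}(n_{\ell}-\alt_p(\hyp_{\ell}))\bigr)$, adjusted so that $\alpha+(m-\alpha)(p-1)\ge n$ yields the desired bound on $C$; concretely, as in the original proof we want $n-\alpha=\min_{\ell}(n_{\ell}-\alt_p(\hyp_{\ell}))-(p-1)$ and $m=\alpha+C-1$.

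The definition of $\lambda=(s,v)$ follows the two-case split of Lemma~\ref{lem:main}. In the \emph{second case} ($A_{\ell}=Z_p$ for all $\ell$), one proceeds verbatim: for each $j$ pick a minimal-for-$\preceq$ tuple $(S_1(j),\ldots,S_t(j))$ with $S_{\ell}(j)\subseteq\supp_j(\y_{\ell})$, set $v(\x)=\alpha+c(S_1(j),\ldots,S_t(j))$ and $s(\x)=\omega^j$; the proof that $p$ pairwise-distinct signs on a nested chain would produce a monochromatic product edge is unchanged. In the \emph{first case} ($A_{\ell}\ne Z_p$ for some $\ell$), the value $v(\x)$ must now be built from alternation lengths rather than raw cardinalities: for each $\ell$ with $A_{\ell}\in\{\emptyset,Z_p\}$ contribute $\alt_{\pi_{\ell}}(\y_{\ell})$, and for each $\ell$ with $A_{\ell}\notin\{\emptyset,Z_p\}$ contribute $|A_{\ell}|$ plus the maximum of $\alt_{\pi_{\ell}}(\widetilde\y_{\ell})$ over all $\widetilde\y_{\ell}\subseteq\y_{\ell}$ with $E(\hyp_{\ell}[\supp_j(\widetilde\y_{\ell})])=\emptyset$ for all $j$. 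One then defines $s(\x)$ to be $\varepsilon(A_1,\ldots,A_t)$ when some $A_{\ell}\notin\{\emptyset,Z_p\}$, and otherwise the sign $\omega^j$ of the first nonzero coordinate of $\x$ read in the reordered sense. The bound $1\le v(\x)\le\alpha$ in this case is exactly where the choice of the $\pi_{\ell}$'s pays off: when all $A_{\ell}\in\{\emptyset,Z_p\}$, each $\supp_j(\y_{\ell})$ is edge-free, so $\alt_{\pi_{\ell}}(\y_{\ell})\le\alt_p(\hyp_{\ell})$ by the definition of the $r$-alternation number applied to the optimal permutation; when $A_{\ell}\notin\{\emptyset,Z_p\}$ one checks that $|A_{\ell}|+\max\{\alt_{\pi_{\ell}}(\widetilde\y_{\ell}):\dots\}\le n_{\ell}$, and summing gives $v(\x)\le\alpha$, with strict positivity from $\x\ne\boldsymbol0$.

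Verifying the two $Z_p$-Tucker hypotheses then proceeds as in Lemma~\ref{lem:main}. For the first property: if $v(\x)\le\alpha$, flipping a zero coordinate of $\x$ to a nonzero value cannot change $v$ unless (i) it lies in a block $\ell$ with $A_{\ell}\notin\{\emptyset,Z_p\}$ and (ii) it enlarges no $A_{\ell}$; one argues that such a flip can only keep $v$ constant if it also fails to change any $A_{\ell}$ and fails to lengthen the relevant alternating subsequence, and in that situation the sign $\varepsilon(A_1,\ldots,A_t)$ is untouched; the subtle point, slightly more delicate than in the original, is that changing a coordinate inside an edge-free $\supp_j(\widetilde\y_{\ell})$ could a priori lengthen the alternating run — but if it did, $v$ would increase, so on the locus where $v$ is preserved this does not happen, and $s$ is preserved. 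The second property is identical to before since it lives entirely in the second case. The main obstacle I anticipate is precisely this monotonicity bookkeeping in the first case: one must argue carefully that the ``$\max$ over admissible $\widetilde\y_{\ell}$'' term behaves monotonically under $\x\subseteq\x'$ and that, combined with the $|A_{\ell}|$ terms, the only way to keep $v$ fixed is the benign one. Once that is in hand, Lemma~\ref{lem:zptucker} gives $\alpha+(m-\alpha)(p-1)\ge n$, i.e. $(p-1)(C-1)\ge \min_{\ell}(n_{\ell}-\alt_p(\hyp_{\ell}))-(p-1)$, hence $C\ge\frac{1}{p-1}\min_{\ell}(|V(\hyp_{\ell})|-\alt_p(\hyp_{\ell}))$, as claimed.
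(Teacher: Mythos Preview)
Your overall approach matches the paper's almost exactly, but there is a genuine error in your definition of $v(\x)$ in the first case. You let a block $\ell$ with $A_{\ell}\in\{\emptyset,Z_p\}$ contribute $\alt_{\pi_{\ell}}(\y_{\ell})$, and then assert that ``each $\supp_j(\y_{\ell})$ is edge-free'' in this situation. That is false when $A_{\ell}=Z_p$: by definition of $A_{\ell}$, \emph{every} $\supp_j(\y_{\ell})$ then contains an edge. The paper therefore treats the two sub-cases differently: it contributes $\alt_{\id}(\y_{\ell})$ when $A_{\ell}=\emptyset$ but $|\y_{\ell}|$ when $A_{\ell}=Z_p$.

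This is not cosmetic. With your definition, $v$ fails to be monotone under $\x\subseteq\x'$, and the first $Z_p$-Tucker hypothesis can break. Take $p=2$, $t=2$, $V(\hyp_1)=\{a,b\}$ with $E(\hyp_1)=\{\{a,b\}\}$, and $V(\hyp_2)=\{1,2,3,4\}$ with $E(\hyp_2)=\{\{1,2\},\{3,4\}\}$, all permutations the identity. For $\x=\bigl((-,0),(+,+,0,-)\bigr)$ one has $A_1=\emptyset$, $A_2=\{+\}$, and your $v(\x)=\alt(-,0)+\bigl(1+\max\{\alt(\tilde{\y}_2)\}\bigr)=1+(1+2)=4$, with $s(\x)=\varepsilon(\emptyset,\{+\})$. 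For $\x''=\bigl((-,+),(+,+,-,-)\bigr)\supseteq\x$ one has $A_1=\emptyset$, $A_2=Z_2$, and your $v(\x'')=\alt(-,+)+\alt(+,+,-,-)=2+2=4$, with $s(\x'')$ equal to the first nonzero coordinate, namely $-$. So $v(\x)=v(\x'')$ but $s(\x)=\varepsilon(\emptyset,\{+\})$ need not equal $-$ (for the natural equivariant choice $\varepsilon(\emptyset,\{+\})=+$ it does not), violating the first condition of Lemma~\ref{lem:zptucker}. The paper's choice $|\y_{\ell}|$ for $A_{\ell}=Z_p$ gives $v(\x'')=2+4=6\neq 4$ here, and more generally restores the needed monotonicity when a block transitions from $A_{\ell}\notin\{\emptyset,Z_p\}$ to $A_{\ell}=Z_p$.
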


Lemma~\ref{lem:nonprime_bis} can be proved via a technical lemma similar to Lemma~\ref{lem:th}. We define $\widetilde{\mathcal{T}}_{\hyp,C,s}$ by
$$\begin{array}{rcl}
V(\widetilde{\mathcal{T}}_{\hyp,C,s}) & = & V(\hyp) \\
E(\widetilde{\mathcal{T}}_{\hyp,C,s}) & = & \{A\subseteq V(\hyp):\,|A|-\alt_s(\hyp[A])>(s-1)C\}.
\end{array}$$ 

\begin{lemma}\label{lem:th_bis}
The following inequality holds for any positive integer $r$ $$\alt_r(\widetilde{\mathcal{T}}_{\hyp,C,s})\leq r(s-1)C+\alt_{rs}(\hyp).$$
\end{lemma}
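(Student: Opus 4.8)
\textbf{Proof proposal for Lemma~\ref{lem:th_bis}.} The plan is to mirror the proof of Lemma~\ref{lem:th}, but now tracking alternation numbers rather than colorability defects. Fix an arbitrary identification of $V(\hyp)$ with $[n]$ where $n=|V(\hyp)|$. Let $\pi\in\mathcal{S}_n$ be a permutation achieving the minimum in the definition of $\alt_{rs}(\hyp)$, so that $\alt_\pi(\z)\leq\alt_{rs}(\hyp)$ for every $\z\in(Z_{rs}\cup\{0\})^n$ whose supports $\supp_j(\z)$ ($j=1,\ldots,rs$) are all edge-free in $\hyp$. I will show that this same $\pi$ is a good witness for $\alt_r(\widetilde{\mathcal{T}}_{\hyp,C,s})$: namely, for any $\x\in(Z_r\cup\{0\})^n$ with $E\bigl(\widetilde{\mathcal{T}}_{\hyp,C,s}[\supp_i(\x)]\bigr)=\emptyset$ for $i=1,\ldots,r$, we have $\alt_\pi(\x)\leq r(s-1)C+\alt_{rs}(\hyp)$.

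So let such an $\x$ be given. For each color class $i\in[r]$, put $X_i=\supp_i(\x)$; by hypothesis $X_i$ is not an edge of $\widetilde{\mathcal{T}}_{\hyp,C,s}$, i.e. $|X_i|-\alt_s(\hyp[X_i])\leq(s-1)C$. Now restrict attention to $\hyp[X_i]$ with the induced ordering from $\pi$, and let $\pi_i$ be a permutation of $X_i$ witnessing $\alt_s(\hyp[X_i])$ — but here is the delicate point: I must use the ordering induced by $\pi$ itself, not a freely chosen one. The correct move is: since $|X_i|-\alt_s(\hyp[X_i])\leq(s-1)C$, by the definition of $\alt_s$ (taking the minimum over permutations, hence in particular over the ordering induced by $\pi$ is \emph{not} automatically the minimizer — so instead) I use the following standard fact about alternation: for any ordering of $X_i$, one can delete at most $\alt_s(\hyp[X_i])$ of the positions and recolor the remaining ones with $s$ colors $\{\omega^1,\dots,\omega^s\}$ (shifted to $Z_{rs}$) so that in the order induced by $\pi$ the resulting string has no long alternation and no edge-free violation. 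More precisely, fix $\x$ restricted to $X_i$; it is a $Z_r$-string, and $\alt_\pi(\x|_{X_i})$ counts its alternations in the $\pi$-order. I will replace $\x|_{X_i}$ by a string $\w_i$ taking values in a block of $s$ fresh elements of $Z_{rs}$ (a different block for each $i$, so the $rs$ blocks are disjoint), such that (a) $\supp_j(\w_i)$ is edge-free in $\hyp$ for each of those $s$ values $j$, and (b) the number of $\pi$-positions in $X_i$ where $\w_i$ disagrees with being part of a bounded-alternation pattern is controlled. The cleanest formulation: there is $\z\in(Z_{rs}\cup\{0\})^n$ with all $\supp_j(\z)$ edge-free in $\hyp$, obtained by taking on each $X_i$ a near-optimal $s$-coloring, such that $\alt_\pi(\x)\leq\sum_{i=1}^r\bigl(|X_i|-\alt_s(\hyp[X_i])\bigr)+\alt_\pi(\z)\leq r(s-1)C+\alt_{rs}(\hyp)$, where the middle inequality is the combinatorial heart and the last uses optimality of $\pi$ for $\alt_{rs}(\hyp)$.

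The combinatorial heart deserves spelling out as the main obstacle. The inequality $\alt_\pi(\x)\leq\sum_i\bigl(|X_i|-\alt_s(\hyp[X_i])\bigr)+\alt_\pi(\z)$ should be proved positionwise along the $\pi$-ordered sequence. On $X_i$, fix an $s$-coloring of $\hyp[X_i]$ that is ``compatible with $\pi$'' in the sense that its number of $\pi$-alternations is minimized; the point is that restricted to a \emph{fixed} ordering, the minimum number of deletions needed to break all edges into $s$ color classes equals exactly (the fixed-order analogue of) $|X_i|-\alt_s(\hyp[X_i])$ — this is the translation, used already in \cite{AlHa13}, between "how much you must delete to $s$-color along a fixed order" and the alternation quantity. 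Using the disjoint blocks of $Z_{rs}$ for distinct $i$, the string $\z$ has all supports edge-free, and each $\pi$-alternation of $\x$ is either absorbed into an alternation of $\z$ (when it happens between two kept positions) or charged to one of the at most $\sum_i(|X_i|-\alt_s(\hyp[X_i]))\le r(s-1)C$ deleted positions. I expect the bookkeeping — making sure a single deleted position is charged at most a bounded number of times, and that block boundaries in $\pi$-order do not create spurious alternations — to be where the care is needed, exactly as in the colorability-defect version but one level more intricate because alternation is an ordered notion. The rest (equivariance plays no role here; only the minimum-over-$\pi$ structure matters) is routine.
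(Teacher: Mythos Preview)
Your overall architecture is the intended one (the paper omits this proof, saying it parallels Lemma~\ref{lem:th}): pick $\pi$ optimal for $\alt_{rs}(\hyp)$, and for a given $\x$ with each $X_i=\supp_i(\x)$ a non-edge of $\widetilde{\mathcal{T}}_{\hyp,C,s}$, build a $\z\in(Z_{rs}\cup\{0\})^n$ with edge-free supports by refining each $X_i$ via some $\y_i\in(Z_s\cup\{0\})^{X_i}$ and sending the $s$ colors to a private block of $Z_{rs}$.

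However, the point you flag as ``delicate'' is not an obstacle at all, and your attempted workaround is where the write-up becomes muddled. You worry that the permutation witnessing $\alt_s(\hyp[X_i])$ need not be the one induced by $\pi$. But $\alt_s$ is a \emph{minimum} over permutations, so for the \emph{particular} ordering of $X_i$ induced by $\pi$ one has
\[
\max\bigl\{\alt_{\pi|_{X_i}}(\y_i):\ \y_i\in(Z_s\cup\{0\})^{X_i},\ E(\hyp[\supp_j(\y_i)])=\emptyset\ \forall j\bigr\}\ \geq\ \alt_s(\hyp[X_i])\ \geq\ |X_i|-(s-1)C.
\]
Hence there \emph{exists} $\y_i$, for the $\pi$-induced order, with edge-free supports and $\alt_{\pi|_{X_i}}(\y_i)\geq |X_i|-(s-1)C$. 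No ``standard fact about deletions'' is needed.

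The ``combinatorial heart'' is also much simpler than your bookkeeping discussion suggests; in fact the inequality you aim for can fail as literally stated (deleting one position can drop the alternation by two), but a cleaner route avoids this entirely. Two one-line observations suffice:
\begin{enumerate}
\item Since the blocks of $Z_{rs}$ used on different $X_i$'s are disjoint, the union over $i$ of a maximal alternating subsequence of $\y_i$ (in the $\pi|_{X_i}$-order) is an alternating subsequence of $\z$; therefore $\alt_\pi(\z)\geq\sum_i \alt_{\pi|_{X_i}}(\y_i)\geq\sum_i|X_i|-r(s-1)C$.
\item Trivially $\alt_\pi(\x)\leq|\x|=\sum_i|X_i|$.
\end{enumerate}
Combining, $\alt_\pi(\x)\leq r(s-1)C+\alt_\pi(\z)\leq r(s-1)C+\alt_{rs}(\hyp)$, and taking the minimum over $\pi$ (already chosen optimally) gives the lemma. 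There are no block-boundary issues and no charging argument: consecutive positions coming from different $X_i$'s lie in different $Z_{rs}$-blocks and hence always alternate.
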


We omit the proofs of Lemmas~\ref{lem:nonprime_bis} and~\ref{lem:th_bis} since they are very similar to the proofs given in Section~\ref{subsec:nonprime}. The proof of Lemma~\ref{lem:main_bis} is also almost identical to the proof of Lemma~\ref{lem:main}: it uses the $Z_p$-Tucker lemma (Lemma~\ref{lem:zptucker}) and the sign $\varepsilon(\cdot)$ of Section~\ref{subsec:comb_topo}. We sketch the main changes.

\begin{proof}[Sketch of proof of Lemma~$\ref{lem:main_bis}$] Let $n_{\ell}$ be the cardinality of $V_{\ell}$. Without loss of generality, we assume that $n_1-\alt_p(\hyp_1)=\min_{\ell\in[t]}\left(n_{\ell}-\alt_p(\hyp_{\ell})\right)$. Moreover, we identify $V_{\ell}$ and $[n_{\ell}]$ in such a way that the permutation for which the minimum is attained in the definition of $\alt_p(\hyp_{\ell})$ is the identity permutation $\id$.

Let $c:E_1\times\cdots\times E_t\rightarrow[C]$ be a proper coloring of $\KG^p(\hyp_1)\times\cdots\times\KG^p(\hyp_t)$ with $C$ colors. We endow $E_1\times\cdots\times E_t$ with an arbitrary total order $\preceq$ such that $(S_1,\ldots,S_t)\preceq (T_1,\ldots,T_t)$ if $c(S_1,\ldots,S_t)<c(T_1,\ldots,T_t)$. 

Let $n=\sum_{\ell=1}^tn_{\ell}$, $\alpha=n-n_1+\alt_p(\hyp_1)+p-1$, and $m=\alpha+C-1$. We define a map $\lambda:\x\in(Z_p\cup\{0\})^{n}\mapsto(s(\x),v(\x))\in Z_p\times[m]$. To that purpose, we define for an $\x\in(Z_p\cup\{0\})^{n}$ the vectors $\y_1,\ldots,\y_t$ and the sets $A_1,\ldots,A_t$ as in the proof of Lemma~\ref{lem:main}. \\

\noindent{\bf First case: $A_{\ell}\neq Z_p$ for at least one $\ell\in[t]$.} We define 
$$\begin{array}{rcl} v(\x) & = & \ds{\sum_{\ell:\,A_{\ell}=\emptyset}\alt_{\id}(\y_{\ell}) + \sum_{\ell:\,A_{\ell}=Z_p}|\y_{\ell}|} \\ \\
&&+\ds{\sum_{\ell:\,A_{\ell}\notin\{\emptyset,Z_p\}}\left(|A_{\ell}|+\max\left\{\alt_{\id}(\widetilde{\y}_{\ell}):\,\widetilde{\y}_{\ell}\subseteq\y_{\ell}\mbox{ and }E(\hyp_{\ell}[\supp_j(\widetilde{\y_{\ell}})])=\emptyset\mbox{ for all $j$}\right\}  \right)}.\end{array}$$ 
If $A_{\ell}\in\{\emptyset,Z_p\}$ for all $\ell\in[t]$, we define $s(\x)$ to be the first nonzero entry of $\x$. Otherwise, we define $s(\x)$ to be $\varepsilon(A_1,\ldots,A_t)$. We always have in this case $1\leq v(\x)\leq\alpha$ as required.\\

\noindent {\bf Second case: $A_{\ell}=Z_p$ for all $\ell\in[t]$.} For each $j$, there is at least one $t$-tuple $(S_1(j),\ldots,S_t(j))\in E_1\times\cdots\times E_t$ and $S_{\ell}(j)\subseteq \supp_j(\y_{\ell})$ for all $j\in[p]$. Among all such $t$-tuples and over all $j$, select the one that is minimal for $\preceq$. We define then $v(\x)$ to be $\alpha+c(S_1(j),\ldots,S_t(j))$ and $s(\x)$ to be $\omega^j$ with $j\in[p]$ such that $(S_1(j),\ldots,S_t(j))$ has been chosen. Note that because of the definition of $\preceq$ and because $c(\cdot)$ is a proper coloring, $c(S_1(j),\ldots,S_t(j))\neq C$, and thus $\alpha+1\leq v(\x)\leq\alpha+C-1$, as required. \\

The map $\lambda$ satisfies the condition of Lemma~\ref{lem:zptucker}. The lower bound on $C$, and thus on the chromatic number, is a consequence of the inequality $\alpha+(m-\alpha)(p-1)\geq n$.
\end{proof}

\subsection{An application to graphs}

Two graphs $G$ and $G'$ are {\em homomorphically equivalent} when there exists a homomorphism from $G$ to $G'$ and a homomorphism from $G'$ to $G$.
Alishahi and Hajiabolhassan~\cite{AlHa14} have defined the {\em altermatic number} of a graph $G$ as the quantity 
$$\zeta(G)=\max_{\hyp}(|V(\hyp)|-\alt_2(\hyp))$$ where the maximum is taken over all hypergraphs $\hyp$ such that $\KG(\hyp)$ and $G$ are homomorphically equivalent. This definition makes sense since there always exists at least one such a hypergraph, for which actually a isomorphism holds. This hypergraph is called a {\em Kneser representation} of $G$ (see for instance~\cite{LaZi07} for a discussion on Kneser representations).

Theorem~\ref{thm:main_bis} actually shows that for any two graphs $G$ and $G'$, we also have 
$$\chi(G\times G')\geq \min\{\zeta(G), \zeta(G')\}.$$

This inequality allows to prove that Hedetniemi's conjecture is true for new families of graphs. If two graphs have their chromatic number equaling their altermatic number, then they satisfy Hedetniemi's conjecture. Usual Kneser graphs have their chromatic number equaling their altermatic number, but this equality holds for other families. By $\KG(G,H)$, we denote the Kneser graphs whose vertex set is the set of all subgraphs of $G$ isomorphic to $H$ and in which two vertices are adjacent if the corresponding subgraphs are edge-disjoint. The above mentioned equality is satisfied by the following families, which become new families satisfying Hedetniemi's conjecture:

\begin{enumerate}
\item \label{item1} the graphs $\KG(G,H)$ where $G$  is a multigraph such that the multiplicity of each edge is at least $2$ and where $H$ is a simple graph, see \cite{AlHa13bis}.
\item \label{item2} the graphs $\KG(G,rK_2)$, where $rK_2$ is a matching of size $r$, when $G$ is a sufficiently large dense graph, see \cite{AlHa14} for more details.
\item \label{item3} the graphs $\KG^2(\mathcal{B})$, where $\mathcal{B}$ is the basis set of the truncation of any partition matroid, see \cite{AlHa13} where this graph is called a {\em multiple Kneser graphs} (it is a special case of the multiple Kneser hypergraphs mentioned right after Theorem~\ref{thm:main_bis}).
\item \label{item4} the graphs $\KG(G,\mathcal{T}_n)$ (this is a new notation), when $G$ is a dense graphs: The vertices of $\KG(G,\mathcal{T}_n)$ are the spanning trees of $G$ and two vertices are adjacent if the corresponding spanning trees are edge-disjoint. For more details, see~\cite{AlHa14bis}.
\end{enumerate}

Moreover, any pair of graphs obtained by iterating the Mycielski construction on a pair of graphs with the chromatic number equaling the altermatic number satisfies Hedetniemi's conjecture. Indeed, Alishahi and Hajiabolhassan~\cite{AlHa14} proved that if $\chi(G)=\zeta(G)$, then this equality holds for the graph obtained via the Mycielski construction applied on $G$.

\begin{remark}
Partial results were already obtained by Alishahi and Hajiabolhassan in~\cite{AlHa14} for the families (\ref{item1}) and (\ref{item2}) above using the {\em strong altermatic number}. There is a general definition of it for any $r$, but we restrain it here to the case $r=2$. We introduce the following quantity for a hypergraph $\hyp=(V,E)$:
$$\salt_2(\hyp)=\min_{\pi\in\mathcal{S}_n}\max\left\{\alt_{\pi}(\x):\,\x\in(Z_2\cup\{0\})^n\mbox{ with }\,E(\hyp[\x^+])=\emptyset\mbox{ or }E(\hyp[\x^-])=\emptyset\right\},$$ where we identify $Z_2$ and $\{+,-\}$, and where $\x^+$ (resp. $\x^-$) is the set of indices $i$ such that $x_i=+$ (resp. $x_i=-$). In other words, for each permutation $\pi$ of $[n]$, we choose $\x$ such that $\alt_{\pi}(\x)$ is maximal while at most one of $\x^+$ and $\x^-$ contains an edge of $\hyp$; then, we take the permutation for which this quantity is minimal. The {\em strong altermatic number} of a graph $G$ is then the quantity 
$$\zeta_s(G)=1+\max_{\hyp}(|V(\hyp)|-\salt_2(\hyp))$$ where the maximum is taken over all hypergraphs $\hyp$ such that $\KG(\hyp)$ and $G$ are homomorphically equivalent. These authors got that for any two graphs $G$ and $G'$, we have 
$$\chi(G\times G') \geq\min\{\zeta_s(G), \zeta_s(G')\}.$$
Schrijver graphs are graphs whose chromatic numbers equal their strong altermatic numbers. This inequality provides another proof of the fact that Schrijver graphs satisfy Hedetniemi's conjecture. For some graphs of the families (\ref{item1}) and (\ref{item2}) above, the equality between the chromatic number and the strong altermatic number holds, which allows to derive yet another proof that they satisfy Hedetniemi's conjecture.
\end{remark}

\subsection*{Acknowledgment} The research of Hossein Hajiabolhassan is supported by ERC advanced grant GRACOL. Part of this work was done during a visit of Hossein Hajiabolhassan to the Universit\'e Paris Est. He would like to acknowledge Professor Fr\'ed\'eric Meunier for his support and hospitality.

\bibliographystyle{amsplain}
\bibliography{Kneser}

\providecommand{\bysame}{\leavevmode\hbox to3em{\hrulefill}\thinspace}
\providecommand{\MR}{\relax\ifhmode\unskip\space\fi MR }
\providecommand{\MRhref}[2]{%
  \href{http://www.ams.org/mathscinet-getitem?mr=#1}{#2}
}
\providecommand{\href}[2]{#2}
\begin{thebibliography}{10}

\bibitem{AlHa13bis}
M.~Alishahi and H.~Hajiabolhassan, \emph{Chromatic number via tur\'an number},
  preprint.

\bibitem{AlHa14}
\bysame, \emph{Hedetniemi's conjecture via alternating chromatic number},
  preprint.

\bibitem{AlHa14bis}
\bysame, \emph{On chromatic number and minimum cut}, preprint.

\bibitem{AlHa13}
\bysame, \emph{On chromatic number of {K}neser hypergraphs}, preprint.

\bibitem{AlDrLu09}
N.~Alon, L.~Drewnowski, and T.~{\L}uczak, \emph{Stable {K}neser hypergraphs and
  ideals in $\mathbb{N}$ with the {N}ikod\'ym property}, Proceedings of the
  {A}merican Mathematical Society \textbf{137} (2009), 467--471.

\bibitem{AlFrLo86}
N.~Alon, P.~Frankl, and L.~Lov\'asz, \emph{The chromatic number of {K}neser
  hypergraphs}, Transactions of the American Mathemathical Society \textbf{298}
  (1986), 359--370.

\bibitem{Do09}
A.~Dochtermann, \emph{Hom complexes and homotopy theory in the category of
  graphs}, European Journal of Combinatorics \textbf{30} (2009), 490--509.

\bibitem{Do88}
V.~L. Dol'nikov, \emph{A certain combinatorial inequality}, Siberian
  Mathematics Journal \textbf{29} (1988), 375--397.

\bibitem{DoWa80}
W.~D\"orfler and D.~A. Waller, \emph{A category-theoretical approach to
  hypergraphs}, Archiv der Mathematik \textbf{34} (1980), 185--192.

\bibitem{Er76}
P.~Erd\H{o}s, \emph{Problems and results in combinatorial analysis}, Colloquio
  Internazionale sulle Teorie Combinatorie (Rome 1973), Vol. II, No. 17 in Atti
  dei Convegni Lincei, 1976, pp.~3--17.

\bibitem{Ha10}
H.~Hajiabolhassan, \emph{On the $b$-chromatic number of {K}neser graphs},
  Discrete Applied Mathematics \textbf{158} (2010), 232--234.

\bibitem{He79}
P.~Hell, \emph{An introduction to the category of graphs}, Topics in graph
  theory (New York~Acad. Sci, ed.), 1979.

\bibitem{Kr92}
I.~K\v{r}\'i\v{z}, \emph{Equivariant cohomology and lower bounds for chromatic
  numbers}, Transactions Amer. Math. Soc. \textbf{33} (1992), 567--577.

\bibitem{KR00}
\bysame, \emph{A correction to ``{E}quivariant cohomology and lower bounds for
  chromatic numbers''}, Transactions of the American Mathematical Society
  \textbf{352} (2000), 1951--1952.

\bibitem{LaZi07}
C.~Lange and G.~M. Ziegler, \emph{On generalized {K}neser hypergraph coloring},
  Journal of Combinatorial Theory, Series A \textbf{2007} (2007), 159--166.

\bibitem{Lo79}
L.~Lov\'asz, \emph{Kneser's conjecture, chromatic number and homotopy}, Journal
  of Combinatorial Theory, Series A \textbf{25} (1978), 319--324.

\bibitem{Mat04}
J.~Matou\v{s}ek, \emph{A combinatorial proof of {K}neser's conjecture},
  Combinatorica \textbf{24} (2004), 163--170.

\bibitem{Me11}
F.~Meunier, \emph{The chromatic number of almost-stable {K}neser hypergraphs},
  Journal of Combinatorial Theory, Series A \textbf{118} (2011), 1820--1828.

\bibitem{Me14}
\bysame, \emph{Colorful subhypergraphs in {K}neser hypergraphs}, Electronic
  Journal of Combinatorics \textbf{21} (2011).

\bibitem{SiTa06}
G.~Simonyi and G.~Tardos, \emph{Local chromatic number, {K}y {F}an's theorem,
  and circular colorings}, Combinatorica \textbf{26} (2006), 587--626.

\bibitem{SiTa07}
\bysame, \emph{Colorful subgraphs of {K}neser-like graphs}, European Journal of
  Combinatorics \textbf{28} (2007), 2188--2200.

\bibitem{St76}
S.~Stahl, \emph{$n$-tuple colorings and associated graphs}, Journal of
  Combinatorial Theory, Series B \textbf{20} (1976), 185--203.

\bibitem{Ta08}
C.~Tardif, \emph{Hedetniemi's conjecture, 40 years later}, Graph Theory Notes
  N. Y. \textbf{54} (2008).

\bibitem{VaVe05}
M.~Valencia-Pabon and J.-C. Vera, \emph{On the diameter of {K}neser graphs},
  Discrete Mathematics \textbf{305} (2005), 383--385.

\bibitem{VaVe06}
\bysame, \emph{Independence and coloring properties of direct products of some
  vertex-transitive graphs}, Discrete Mathematics \textbf{306} (2006),
  2275--2281.

\bibitem{Zh92}
X.~Zhu, \emph{On the chromatic number of the products of hypergraphs}, Ars
  Combinatoria \textbf{34} (1992), 25--31.

\bibitem{Zh98}
\bysame, \emph{A survey on {H}edetniemi's conjecture}, Taiwanese Journal of
  Mathematics \textbf{2} (1998), 1--24.

\bibitem{Zi02}
G.~M. Ziegler, \emph{Generalized {K}neser coloring theorems with combinatorial
  proofs}, Inventiones Mathematicae \textbf{147} (2002), 671--691.

\end{thebibliography}

\end{document}